\date{}
\theoremstyle{plain}
\newtheorem{thm}{Theorem}[section]
\newtheorem{lem}[thm]{Lemma}
\theoremstyle{remark}
\theoremstyle{definition}
\newtheorem{definition}{Definition}
\numberwithin{equation}{section}
\begin{document}

\title{Petty projection inequality on the sphere and on  the hyperbolic space}


\author[Y. Lin]{Youjiang Lin}
\address[Y. Lin]{School of Mathematics and Statistics, Chongqing Technology and Business University, Chongqing 400067, PR China; Dipartimento di Matematica e Informatica ``U. Dini", Universit\'a di Firenze, Viale Morgagni 67/a, 50134 Firenze, Italy}
 \email{\href{mailto: YOUJIANG LIN
<yjl@ctbu.edu.cn; youjiang.lin@unifi.it>}{yjl@ctbu.edu.cn; youjiang.lin@unifi.it}}

\author[Y. Wu]{Yuchi Wu}
\address[Y. Wu]{School of Mathematical Sciences, Key Laboratory of MEA(Ministry of Education) \& Shanghai Key Laboratory of PMMP, East China Normal University, Shanghai 200241, China}
 \email{\href{mailto: Yuchi Wu
<ycwu@math.ecnu.edu.cn>}{ycwu@math.ecnu.edu.cn}}

\begin{abstract}
Using gnomonic projection and Poincar\'e model, we first define the spherical projection body and hyperbolic projection body in spherical space $\mathbb{S}^n$ and hyperbolic space $\mathbb{H}^n$, then define the spherical Steiner symmetrization and hyperbolic Steiner symmetrization, finally prove the spherical projection inequality and hyperbolic projection inequality.
\end{abstract}
\subjclass[2020]{52A55, 28A75, 52A20, 53A35}

\keywords{Projection body; Petty Projection inequality; Spherical and hyperbolic spaces; Steiner symmetrization; Isoperimetric inequality}
\footnote{Research of the first named author is supported by NSFC 11971080 and NSFC 12371137 and Jiangxi Provincial Natural Science Foundation 20232BAB201005. Research of the second author is supported by Science and Technology Commission of Shanghai Municipality 22DZ2229014.}
\footnote{There is no conflict of interest and no assocaited data in this manuscript.}
\maketitle
\section{Introduction}
%
%
%
%
%

In the Brunn-Minkowski theory of Euclidean space $\mathbb{R}^n$, the two classical inequalities which connect the volume of a convex body with that of its polar projection body are the Petty and Zhang projection inequalities, see e.g. \cite{MR0840399,Zhang91}. The Petty projection inequality shows that among all convex bodies with the same volume, the ellipsoids have the largest volume of polar projection body. The Zhang projection inequality shows that the simplices minimise the  volume of polar projection body. Petty projection inequalities have been extended to the $L_p$ Petty projection inequalities and Orlicz projection inequalities, see e.g. \cite{LYZ00,MR2563216}. Moreover, the functional versions of Petty projection inequality---affine P\'olya-Szeg\"o inequality and affine Sobolev inequality have been largely studies, see e.g. \cite{CLYZ09,LYZ02,Lin17,Zhang99}.

Recently some researches on isoperimetry in Euclidean space have been extended to spherical or hyperbolic space, see e.g. \cite{Beckner93,BDF16,BS19,DKY18,Makowski,Yaskin06,BW16,BHPS23,GHS03,HP21,WX14}. F. Besau and E.M. Werner \cite{BW16} introduced the spherical convex floating body for a convex body on the Euclidean unit sphere and define a new spherical area measure---the floating area. We are convinced that the
floating area will become a powerful tool in spherical convex geometry. F. Besau, T. Hack, P. Pivovarov and F.E. Schuster \cite{BHPS23} introduced the spherical centroid body of a centrally-symmetric convex body in the Euclidean unit sphere, studied a number of basic properties of spherical centroid bodies and proved a spherical analogue of the classical polar Busemann-Petty centroid inequality. F. Gao, D. Hug and R. Schneider \cite{GHS03} proved the Urysohn inequality and the Blaschke-Santal\'o inequality in the spherical and hyperbolic space. Later, T. Hack and P. Pivovarov \cite{HP21} proved a randomized version of the spherical and hyperbolic Urysohn-type inequalities. G. Wang and C. Xia \cite{WX14} solved various isoperimetric problems for the quermassintegrals and the curvature integrals in the hyperbolic space $\mathbb{H}^n$ and established quite strong Alexandrov-Fenchel type inequalities.

In comparison with the Euclidean case there are very few results and techniques available in spherical and hyperbolic space. In \cite{GHS03} and \cite{HP21}, the authors
use the two-point symmetrization procedure together with rearrangement inequalities  in order to prove their result. In \cite{BHPS23}, the authors used a probabilistic approach to prove the spherical centroid inequality. In \cite{WX14}, the authors used quermassintegral-preserving curvature flows approach to solve the isoperimetric type problems in the hyperbolic space. It is well-known that Steiner symmetrization is a fundamental tool for attacking problems regarding isoperimetry and related geometric inequalities in Euclidean space $\mathbb{R}^n$, see e.g. \cite{BGG17,CCF05,CF06}. In this paper, we define the spherical and hyperbolic Steiner symmetrizations on $\mathbb{S}^n$ and $\mathbb{H}^n$ which preserve the property of star bodies,  the volume invariance after a Steiner symmetrization and convergence of iterative Steiner symmetrizations. Using the spherical and hyperbolic Steiner symmetrizations, we prove the spherical projection inequality and hyperbolic projection inequality, respectively.

In this paper, first we study the spherical projection body in spherical space, which is a natural analog of Petty projection body in Euclidean space. Using the gnomonic projection from $\mathbb{S}^{n-1}$ onto $\mathbb{R}^n$ and the definition of classical Euclidean Petty projection body, we define the spherical Petty projection body. Using the monotonic increasing property of the measures of the polar bodies of spherical projection bodies after performing a spherical Steiner symmetrization and the continuity of spherical projection operator with respect to spherical Hausdorff distance, we proved the spherical Petty projection inequality.
Let $\mathcal{S}_B(\mathbb{S}_+^n)$ denote the set of spherical star bodies with respect to a spherical cap in $\mathbb{S}_+^n$ (see
Definition \ref{ssb}). Let ${\rm \Pi}^{\circ}_{\mathbb S}(K)$ denote the spherical polar body of spherical projection body of $K$ (see Definition \ref{spb} and Definition \ref{sprob}).

\begin{thm}\label{TSPB}
If $K\in\mathcal{S}_B(\mathbb{S}_+^n)$ and $K^\star$ is the spherical cap center at $e_{n+1}$ with the same measure as $K$, then
\begin{eqnarray}\label{ISPB}
\mathcal H^n\left({\rm \Pi}^{\circ}_{\mathbb S}(K)\right)\leq \mathcal H^n\left({\rm \Pi}^{\circ}_{\mathbb S}(K^\star)\right),
\end{eqnarray}
with the equality if and only if $K=K^\star$.
\end{thm}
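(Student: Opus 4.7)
The plan is to imitate the Euclidean proof of the classical Petty projection inequality via iterated Steiner symmetrization, now carried out intrinsically on the sphere. The three ingredients we will rely on, all announced earlier in the introduction, are: (i) the spherical Steiner symmetrization $S_u$ preserves $\mathcal H^n$-measure of bodies in $\mathcal{S}_B(\mathbb{S}_+^n)$; (ii) the map $K\mapsto \mathcal H^n({\rm \Pi}^{\circ}_{\mathbb S}(K))$ is non-decreasing under $S_u$, with a characterization of the equality case in terms of reflective symmetry; (iii) for a suitably chosen sequence of directions $u_1,u_2,\ldots$, the iterates $K_j:=S_{u_j}\cdots S_{u_1}K$ converge to the spherical cap of the same measure, with respect to the spherical Hausdorff distance.

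Concretely, I would set $K_0=K$ and produce $K_j$ as above. By (i) we have $\mathcal H^n(K_j)=\mathcal H^n(K)=\mathcal H^n(K^\star)$ for every $j$, so the only spherical cap centered at $e_{n+1}$ which can be a limit of the sequence is $K^\star$. By (iii), $K_j\to K^\star$ in the spherical Hausdorff distance, and combined with the continuity of the spherical projection operator ${\rm \Pi}_{\mathbb S}$ with respect to this distance (and continuity of $\mathcal H^n$ on the associated polar bodies), we obtain
\begin{equation*}
\mathcal H^n({\rm \Pi}^{\circ}_{\mathbb S}(K_j))\;\longrightarrow\;\mathcal H^n({\rm \Pi}^{\circ}_{\mathbb S}(K^\star)).
\end{equation*}
But by (ii) the sequence $\{\mathcal H^n({\rm \Pi}^{\circ}_{\mathbb S}(K_j))\}_j$ is monotone non-decreasing and starts at $\mathcal H^n({\rm \Pi}^{\circ}_{\mathbb S}(K))$, so passing to the limit yields inequality \eqref{ISPB}.

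For the equality case, suppose that $\mathcal H^n({\rm \Pi}^{\circ}_{\mathbb S}(K))=\mathcal H^n({\rm \Pi}^{\circ}_{\mathbb S}(K^\star))$. Then every term in the monotone sequence above equals the common value, so each Steiner symmetrization step preserves $\mathcal H^n({\rm \Pi}^{\circ}_{\mathbb S}(K_j))$. Invoking the equality characterization in (ii) forces $K$ to be symmetric with respect to the hyperplane associated to each direction $u_j$. Choosing the directions $u_j$ to form a dense family among the hyperplanes through $e_{n+1}$ yields invariance of $K$ under the full rotation group fixing $e_{n+1}$, and hence $K$ is a spherical cap centered at $e_{n+1}$; comparison of $\mathcal H^n$-measures then gives $K=K^\star$.

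The main obstacle is step (iii): establishing that a suitable iteration of spherical Steiner symmetrizations actually converges to the spherical cap $K^\star$ in the Hausdorff metric on $\mathbb S_+^n$. The corresponding Euclidean statement (due to Klain and to Bianchi--Burchard--Klain) is delicate; adapting it to the sphere via the gnomonic projection requires care, since this projection is not an isometry and must be controlled carefully along the sequence so that the Euclidean counterparts remain bounded and do not escape the model. Secondary care is needed with the continuity of $K\mapsto \mathcal H^n({\rm \Pi}^{\circ}_{\mathbb S}(K))$ on $\mathcal{S}_B(\mathbb{S}_+^n)$, and with verifying that the equality characterization in (ii) is strong enough to force reflection symmetry (not merely symmetry up to a rotation fixing the cap), since this is what drives the rigidity argument in the last paragraph.
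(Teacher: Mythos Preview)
Your proposal is correct and matches the paper's approach almost exactly: iterate spherical Steiner symmetrizations, use the monotonicity of $\mathcal H^n({\rm \Pi}^{\circ}_{\mathbb S}(\cdot))$ under each step together with its equality clause, convergence of the iterates to $K^\star$ in spherical Hausdorff distance, and continuity of the spherical projection operator to pass to the limit.

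The only place you diverge is the equality case, and there your route is more elaborate than necessary and contains a small gap. The paper argues by contrapositive: if $K\neq K^\star$ then some direction $u_o$ satisfies $\hat S_{u_o}K\neq K$; the strict form of the monotonicity lemma (equality holds iff $\hat S_u K=K$) gives $\mathcal H^n({\rm \Pi}^{\circ}_{\mathbb S}(K))<\mathcal H^n({\rm \Pi}^{\circ}_{\mathbb S}(\hat S_{u_o}K))$, and the already-proven inequality applied to $\hat S_{u_o}K$ finishes. Your version instead deduces reflective symmetry across a \emph{dense} family of hyperplanes and then invokes rotational invariance; but the sequence $\{u_j\}$ produced by the convergence lemma need not itself be dense, so you cannot simply ``choose'' it so after the fact. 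The repair is immediate: once equality at every step forces $\hat S_{u_j}K_{j-1}=K_{j-1}$, induction gives $K_j=K$ for all $j$, and the convergence $K_j\to K^\star$ already yields $K=K^\star$ directly, with no need for density or the passage through the rotation group.
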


In hyperbolic geometry, we will use Poincar\'e model. In this model, the hyperbolic space $\mathbb{H}^n$ is identified with the open Euclidean unit ball $\mathbb{B}^n$ equipped with a certain metric. In this paper, we give a $\Phi$ transformation from $\mathbb{B}^n$ to the Euclidean space $\mathbb{R}^n$. Combining Poincar\'e model and $\Phi$ transformation, we can define hyperbolic Steiner symmetrizations, hyperbolic projection bodies and hyperbolic polar bodies. Using the monotonic increasing property of the measure of the polar bodies of hyperbolic star bodies and the continuity of hyperbolic projection operator with respect to hyperbolic Hausdorff distance, we proved the hyperbolic Petty projection inequality.
Let $\mathcal{S}_B(\mathbb{H}^n)$ denote the set of  hyperbolic star bodies with respect to a hyperbolic ball in $\mathbb{H}^n$ (see Definition \ref{hsb}). Let ${\rm \Pi}^{\circ}_{\mathbb H}(K)$ denote the hyperbolic polar body of  hyperbolic projection body of $K$ (see Definition \ref{hpb} and Definition \ref{hprob}).

\begin{thm}\label{THPI}
If $K\in\mathcal{S}_B(\mathbb{H}^n)$ and $K^\star$ is the hyperbolic ball center at $e_{n+1}$ with the same measure as $K$, then
\begin{eqnarray}\label{IHPI}
\mathcal H^n\left({\rm \Pi}^{\circ}_{\mathbb H}(K)\right)\leq \mathcal H^n\left({\rm \Pi}_{\mathbb H}^{\circ}(K^\star)\right),
\end{eqnarray}
with the equality if and only if $K=K^\star$.
\end{thm}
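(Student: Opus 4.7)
The plan is to mirror the Steiner-symmetrization scheme used for the spherical Petty inequality (Theorem~\ref{TSPB}), working throughout with the $\Phi$ transformation and the Poincar\'e model to transport hyperbolic quantities to Euclidean ones. The four ingredients advertised in the introduction can be combined as follows: (i) hyperbolic Steiner symmetrization preserves both the hyperbolic measure of a body $K\in\mathcal{S}_B(\mathbb{H}^n)$ and the class $\mathcal{S}_B(\mathbb{H}^n)$ itself; (ii) under a single hyperbolic Steiner symmetrization $S_{\mathbb H}$, the measure of the polar projection body does not decrease, i.e.\ $\mathcal H^n({\rm \Pi}^{\circ}_{\mathbb H}(K))\leq\mathcal H^n({\rm \Pi}^{\circ}_{\mathbb H}(S_{\mathbb H}K))$; (iii) a carefully chosen sequence of iterated hyperbolic Steiner symmetrizations converges to the hyperbolic ball $K^\star$ in the hyperbolic Hausdorff metric; and (iv) the operator $K\mapsto {\rm \Pi}^{\circ}_{\mathbb H}(K)$ is continuous with respect to that metric.

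The first concrete step is the single-step monotonicity (ii). Fix a totally geodesic hyperbolic hyperplane $H$ passing through $e_{n+1}$ and form $S_{\mathbb H}K$ by symmetrizing $K$ across $H$. Pushing $K$ and $S_{\mathbb H}K$ through $\Phi$ one obtains Euclidean star bodies, and, by the construction of the hyperbolic projection body through $\Phi$, this reduces the inequality to a statement about how Euclidean Steiner symmetrization of the $\Phi$-images interacts with the classical polar projection operator, a well-known monotonicity in the Euclidean Petty theory. I expect this to be the main technical obstacle: the hyperbolic volume element acquires a nontrivial conformal factor under $\Phi$, so I will have to argue carefully that the ``slab'' quantities used to build $S_{\mathbb H}K$ match the Euclidean Steiner slabs of the $\Phi$-images after accounting for this Jacobian, and that the support-function/half-volume identities defining ${\rm \Pi}^{\circ}_{\mathbb H}$ are compatible with the reflection across the Euclidean image of $H$.

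Once (ii) is in hand I would pick a countable dense sequence of directions $\{u_k\}$ in the tangent unit sphere at $e_{n+1}$, set $K_m:=S_{\mathbb H}^{u_m}\cdots S_{\mathbb H}^{u_1}K$, and iterate: (i)--(ii) yield $\mathcal H^n(K_m)=\mathcal H^n(K)$ together with a nondecreasing chain $\mathcal H^n({\rm \Pi}^{\circ}_{\mathbb H}(K))\leq\mathcal H^n({\rm \Pi}^{\circ}_{\mathbb H}(K_m))$. By (iii), $K_m\to K^\star$ in hyperbolic Hausdorff distance; by (iv), $\mathcal H^n({\rm \Pi}^{\circ}_{\mathbb H}(K_m))\to\mathcal H^n({\rm \Pi}^{\circ}_{\mathbb H}(K^\star))$, which gives \eqref{IHPI}. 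For the equality case, equality in \eqref{IHPI} forces every inequality in the monotone chain to be an equality; the equality characterization in (ii) then forces $K$ to coincide with $S_{\mathbb H}^{u_k}K$ for each $k$, and density of $\{u_k\}$ promotes this to full rotational invariance about $e_{n+1}$, so $K=K^\star$.
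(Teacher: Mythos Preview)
Your four-step scheme (i)--(iv) is exactly the architecture of the paper's proof. The one substantive place where you and the paper diverge is in what $S_{\mathbb H}$ actually \emph{is}. You describe it as ``symmetrizing $K$ across a totally geodesic hyperplane $H$'' and then worry about reconciling hyperbolic slabs with Euclidean slabs under $\Phi$; the paper sidesteps this entirely by \emph{defining} the hyperbolic Steiner symmetrization extrinsically as
\[
\check S_u K:=\Phi_p^{-1}\bigl(r_K\,S_u\Phi_p(K)\bigr),
\]
where $S_u$ is the ordinary Euclidean Steiner symmetrization and $r_K\in(0,1]$ is the unique scalar making $\mu_n(\check S_u K)=\mu_n(K)$ (the scaling is needed because the Euclidean Steiner step can only \emph{increase} the weighted integral $\int (1+\|y\|^2)^{-1/2}\,dy$, by Hardy--Littlewood). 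With this definition, (i) is immediate, and (ii) reduces cleanly to the known Euclidean inclusion $S_u\Pi^{\ast}(\Phi_p K)\subset\Pi^{\ast}(S_u\Phi_p K)$ plus the trivial fact that contracting by $r_K\le 1$ enlarges the polar --- the ``technical obstacle'' you anticipate never materializes. If instead you insist on an intrinsic geodesic-slab symmetrization, note that geodesics orthogonal to $H$ are \emph{not} sent by $\Phi_p$ to parallel Euclidean lines, so the reduction to the Euclidean Petty monotonicity would genuinely break; that route is not the one the paper takes.

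For the equality case, your density argument is more elaborate than necessary. The paper argues the contrapositive: if $K\ne K^\star$ there exists \emph{some} direction $u_0$ with $\check S_{u_0}K\ne K$, and the strict form of (ii) already gives
\[
\mathcal H^n\bigl(\Pi^\circ_{\mathbb H}K\bigr)<\mathcal H^n\bigl(\Pi^\circ_{\mathbb H}\check S_{u_0}K\bigr)\le\mathcal H^n\bigl(\Pi^\circ_{\mathbb H}K^\star\bigr).
\]
(Even within your own setup, once equality in the chain forces $K=K_m$ for all $m$ and you already know $K_m\to K^\star$, you get $K=K^\star$ directly, without passing through density or rotational invariance.)
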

\section{Preliminaries}
\subsection{Basic facts from Euclidean convex geometry}

Let $\mathbb{R}^n$ denote $n$-dimensional Euclidean space. Let $o$ denote the origin of $\mathbb{R}^n$. Let $e_1,\cdots, e_n$ denote the standard orthonormal basis of $\mathbb{R}^n$. Let $\mathbb{R}^{n-1}$ denote the subspace of $\mathbb{R}^n$ with the $n$-th component is $0$, i.e., $\mathbb{R}^{n-1}:=\{x\in\mathbb{R}^n,\;x\cdot e_n=0\}$. Let $\mathbb{S}^{n-1}$ denote the set of unit vectors of $\mathbb{R}^n$. Let $B_o(r)$ denote the closed ball centered at the origin $o$ with radius $r$ in $\mathbb{R}^n$.  Let $\mathcal{H}^k$ denote the $k$-dimensional Hausdorff measure. Let $\omega_n$ denote the volume of $B_o(1)$, i.e., $\omega_n:=\mathcal{H}^n(B_o(1))$. A convex body $K$ is a compact convex subset of $\mathbb{R}^n$. The set of convex bodies in $\mathbb{R}^n$ is denoted by $\mathcal{K}\left(\mathbb{R}^n\right)$. We denote by $\mathcal{K}_o\left(\mathbb{R}^n\right)$ the set of convex bodies that contain the origin in their interiors. Let $\|\cdot\|$ denote the Euclidean norm. For $K\in\mathcal{K}\left(\mathbb{R}^n\right)$, $K$ is uniquely determined by its {\it support function} $h_K$ defined by
$$
h_K(x):=\max \{x \cdot y: y \in K\}, \quad x \in \mathbb{R}^n.
$$
The support function is homogeneous of degree $1$, i.e.,
\begin{eqnarray}\label{support+1}
h_K(rx)=rh_K(x),\;\;{\rm for}\;r>0.
\end{eqnarray}
For $K\in\mathcal{K}_o\left(\mathbb{R}^n\right)$, its {\it radial function} is defined by
\begin{eqnarray}\label{Radialfunction}
\rho_K(x):=\max\{r>0:\;rx\in K\}, \quad x \in \mathbb{R}^n\backslash\{o\}.
\end{eqnarray}
The radial function is homogeneous of degree $-1$, i.e.,
\begin{eqnarray}\label{rho-1}
\rho_K(rx)=\frac{1}{r}\rho_K(x),\;\;{\rm for}\;r>0.
\end{eqnarray}

A compact set $K\subset\mathbb{R}^n$ is a {\it star-shaped set} with respect to the $z\in K$ if the intersection of every straight line through $z$ with $K$ is a line segment. Let $K\subset\mathbb{R}^n$ be a compact star shaped set with respect to $z\in K$, the radial function $\rho_{K,z}(\cdot):\mathbb{R}^n\backslash\{o\}\rightarrow\mathbb{R}$ is defined by
\begin{eqnarray}
\rho_{K,z}(x):=\max\{r\geq0:\;z+rx\in K\}.
\end{eqnarray}
 If $\rho_{K,z}$ is strictly positive and continuous,
then we call $K$ a {\it star body} with respect to the $z$, denotes the class of star bodies in $\mathbb{R}^n$ by $\mathcal{S}_z(\mathbb{R}^n)$. If $K\subset\mathbb{R}^n$ is a star body with respect to each point of ball $B_o(r)$, then we say $K$ is a {\it star body with respect to a ball}. The class of star bodies with respect to ball $B_o(r)$ will be denoted by $\mathcal{S}_B(\mathbb{R}^n)$. It is clear that $\mathcal{K}_o(\mathbb{R}^n)\subset\mathcal{S}_B(\mathbb{R}^n)$, i.e., any convex body with the origin as its interior is a star body with respect to a ball.
For $K\in\mathcal{S}_B(\mathbb{R}^n)$, we have the following volume formula
\begin{eqnarray}\label{volformula}
\mathcal{H}^n(K)=\frac{1}{n}\int_{\mathbb{S}^{n-1}}\rho^n_K(u)du
\end{eqnarray}
and the perimeter formula
\begin{eqnarray}\label{performula}
\mathcal{H}^{n-1}(\partial K)=\int_{\mathbb{S}^{n-1}}\rho^{n-1}_K(u)\left[u\cdot\nu^K\left(\rho_K(u)u\right)\right]^{-1}du,
\end{eqnarray}
where $\nu^K\left(\rho_K(u)u\right)$ denotes the outer unit normal vector of $K$ at the boundary point $\rho_K(u)u$.

For $K\in\mathcal{S}_o\left(\mathbb{R}^n\right)$, its {\it polar body} is defined by
$$K^{\ast}:=\{y\in\mathbb{R}^n:\;y\cdot x\leq 1\;\;{\rm for\;any}\;x\in K\}.$$
It is well-known that if $K\in\mathcal{K}_o\left(\mathbb{R}^n\right)$,
\begin{eqnarray}\label{Edoublestar}
\left(K^{\ast}\right)^{\ast}=K.
\end{eqnarray}
The support function and radial function of $K\in\mathcal{K}_o(\mathbb{R}^n)$ have the following relationship:
\begin{eqnarray}\label{hkrhokstar}
h_K(x)\rho_{K^{\ast}}(x)=1,\;\;\;\;x\in\mathbb{R}^n\backslash\{o\}.
\end{eqnarray}

The {\it radial distance} between $K, L\in\mathcal{S}_o(\mathbb{R}^n)$ is defined by
\begin{eqnarray}
d_R(K,L):=\left|\rho_K-\rho_L\right|_{\infty}=\max_{u\in\mathbb{S}^{n-1}}\left|\rho_K(u)-\rho_L(u)\right|.
\end{eqnarray}
The {\it Hausdorff distance} between the compact sets $K,L\subset \mathbb{R}^n$ is defined by
\begin{eqnarray}
d_H(K,L):=\min\left\{r\geq 0:\;K\subset L+B_o(r),\;L\subset K+B_o(r)\right\}.
\end{eqnarray}

For  $K\in\mathcal{S}_B\left(\mathbb{R}^n\right)$, its {\it Petty projection body} is defined with its support function:
\begin{eqnarray}\label{Dsupportfunction}
h_{{\rm \Pi}(K)}(z):=\frac{1}{2}\int_{\partial K}\left|\nu^K(x)\cdot z\right|d\mathcal{H}^{n-1}(x),
\end{eqnarray}
where $\partial K$ denotes the boundary of $K$, $\nu^K(x)$ denotes the unit outer normal vector of $K$ at the boundary point $x\in\partial K$, ``$\cdot$" denotes the Euclidean
scalar product and $\mathcal{H}^{n-1}$ denotes the $(n-1)$-Hausdorff measure. The polar body of ${\rm \Pi}(K)$ will be denoted by ${\rm \Pi}^{\ast}(K)$ rather than $({\rm \Pi}(K))^{\ast}$.

The following lemma shows that the Petty projection operator ${\rm \Pi}:\;\mathcal{S}_B\left(\mathbb{R}^n\right)\rightarrow\mathcal{K}_o(\mathbb{R}^n)$ is continuous
when $K_i$ converges to $K_0$ in the Hausdorff distance. 

\begin{lem}\label{projectionbodyconvergece}\cite[Proposition 4.1]{Lin21}
Let $K_0,K_i\in\mathcal{S}_B (\mathbb{R}^n)$, $i\in\mathbb{N}$. If $K_i\rightarrow K_0$ in the Hausdorff distance and $\mathcal{H}^{n-1}(\partial K_i)\rightarrow\mathcal{H}^{n-1}(\partial K_0)$, then ${\rm \Pi} K_i\rightarrow {\rm \Pi} K_0$ in the Hausdorff distance.
\end{lem}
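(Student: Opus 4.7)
The plan is to reduce the Hausdorff convergence $\Pi K_i \to \Pi K_0$ to uniform convergence of support functions on $\mathbb{S}^{n-1}$, and prove the latter via weak convergence of an auxiliary surface-area-type measure on $\mathbb{S}^{n-1}$. Since every member of $\mathcal{S}_B(\mathbb{R}^n)$ is a star body with respect to the fixed ball $B_o(r)$, Hausdorff convergence $K_i \to K_0$ forces uniform convergence of the radial functions $\rho_{K_i} \to \rho_{K_0}$ on $\mathbb{S}^{n-1}$, with uniform two-sided bounds. Using the Jacobian identity behind the perimeter formula \eqref{performula}, the defining integral \eqref{Dsupportfunction} can be rewritten as a push-forward integral, so that
\begin{equation*}
h_{\Pi K}(z) = \tfrac{1}{2} \int_{\mathbb{S}^{n-1}} |u \cdot z| \, dS_K(u),
\end{equation*}
where $S_K$ is the push-forward of $\mathcal{H}^{n-1}|_{\partial K}$ under the Gauss map $\nu^K$. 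In particular $S_K(\mathbb{S}^{n-1}) = \mathcal{H}^{n-1}(\partial K)$, so the hypothesis delivers convergence of total masses.

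The central step, which I expect to be the main technical obstacle, is to establish $S_{K_i} \to S_{K_0}$ weakly on $\mathbb{S}^{n-1}$. Both hypotheses enter essentially: Hausdorff convergence aligns the boundaries spatially, while perimeter convergence prevents escape or concentration of mass under the Gauss map (without convexity this map can be badly behaved, and creases in a limit body permit a priori pathological accumulation of normals). I would first establish the lower semicontinuity $\liminf_i S_{K_i}(U) \geq S_{K_0}(U)$ on open sets $U \subseteq \mathbb{S}^{n-1}$: for $\mathcal{H}^{n-1}$-almost every $x \in \partial K_0$ with $\nu^{K_0}(x) \in U$, one produces nearby boundary points of $K_i$ whose outer normals still lie in $U$ for large $i$, using uniform convergence of radial functions together with a local graph representation of $\partial K_0$. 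Since total masses are uniformly bounded, Prokhorov's theorem furnishes subsequential weak limits $\mu$; lower semicontinuity forces $\mu \geq S_{K_0}$ on open sets, while $\mu(\mathbb{S}^{n-1}) = S_{K_0}(\mathbb{S}^{n-1})$ from the perimeter assumption forces $\mu = S_{K_0}$, so the full sequence converges weakly to $S_{K_0}$.

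Finally, weak convergence of $S_{K_i}$ against the continuous test function $u \mapsto |u \cdot z|$ gives pointwise convergence $h_{\Pi K_i}(z) \to h_{\Pi K_0}(z)$ for each $z \in \mathbb{S}^{n-1}$. The elementary Lipschitz bound
\begin{equation*}
|h_{\Pi K}(z) - h_{\Pi K}(z')| \leq \tfrac{1}{2} \mathcal{H}^{n-1}(\partial K) \, \|z - z'\|,
\end{equation*}
together with uniform boundedness of the perimeters, renders $\{h_{\Pi K_i}\}$ equicontinuous on $\mathbb{S}^{n-1}$, so pointwise convergence upgrades to uniform convergence. Since uniform convergence of support functions on $\mathbb{S}^{n-1}$ is equivalent to Hausdorff convergence of the corresponding convex bodies, the conclusion $\Pi K_i \to \Pi K_0$ follows.
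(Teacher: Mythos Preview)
The paper does not supply its own proof of this lemma; it is quoted verbatim from \cite[Proposition~4.1]{Lin21}, so there is no in-paper argument to compare against. I can therefore only assess your proposal on its own merits and against what the cited reference presumably does.

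Your overall architecture is correct: reduce Hausdorff convergence of $\Pi K_i$ to uniform convergence of support functions, obtain pointwise convergence from weak convergence of the push-forward measures $S_{K_i}$, and upgrade to uniform convergence via the equicontinuity bound $|h_{\Pi K}(z)-h_{\Pi K}(z')|\le \tfrac12\mathcal H^{n-1}(\partial K)\,\|z-z'\|$. The reduction $h_{\Pi K}(z)=\tfrac12\int_{\mathbb S^{n-1}}|u\cdot z|\,dS_K(u)$ and the use of Prokhorov compactness plus total-mass matching are all fine.

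The genuine gap is the proposed justification of the lower semicontinuity $\liminf_i S_{K_i}(U)\ge S_{K_0}(U)$. You write that for a.e.\ $x\in\partial K_0$ with $\nu^{K_0}(x)\in U$ one can find nearby boundary points of $K_i$ whose normals also lie in $U$, ``using uniform convergence of radial functions together with a local graph representation.'' This does not follow: uniform convergence of $\rho_{K_i}$ (equivalently, Hausdorff convergence) controls positions of boundary points but says nothing about their normals, which depend on \emph{derivatives} of the radial (or graph) function. Without convexity, one can have $\rho_{K_i}\to\rho_{K_0}$ uniformly while $\nabla\rho_{K_i}$ oscillates wildly; the perimeter hypothesis is precisely what rules this out, but your sketch never invokes it at this point. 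A local graph representation of $\partial K_0$ does not help either, since you have no $C^1$ (or even $W^{1,1}$) convergence of the graph functions of $K_i$.

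The clean way to close this gap---and almost certainly the route taken in \cite{Lin21}, which treats sets of finite perimeter---is to pass through BV theory. Hausdorff convergence of star bodies in $\mathcal S_B(\mathbb R^n)$ forces $\chi_{K_i}\to\chi_{K_0}$ in $L^1$, hence $D\chi_{K_i}\to D\chi_{K_0}$ weakly* as $\mathbb R^n$-valued Radon measures; the hypothesis $\mathcal H^{n-1}(\partial K_i)\to\mathcal H^{n-1}(\partial K_0)$ is exactly $|D\chi_{K_i}|(\mathbb R^n)\to|D\chi_{K_0}|(\mathbb R^n)$, i.e.\ strict convergence. Reshetnyak's continuity theorem then yields $\int f(\nu^{K_i})\,d|D\chi_{K_i}|\to\int f(\nu^{K_0})\,d|D\chi_{K_0}|$ for every continuous $f$ on $\mathbb S^{n-1}$, which is precisely the weak convergence $S_{K_i}\rightharpoonup S_{K_0}$ you need. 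With this in place, the rest of your argument goes through unchanged.
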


For $K\in\mathcal{S}_o\left(\mathbb{R}^n\right)$, its {\it Steiner symmetrization} along the direction $u\in\mathbb{S}^{n-1}$ is defined by
\begin{eqnarray}\label{DESteiner}
S_uK:=\bigcup_{x'\in K|_{u^{\perp}}}\left\{x'+tu:\;t\in \left[-\frac{1}{2}\mathcal{H}^1(K_{u,x'}),\frac{1}{2}\mathcal{H}^1(K_{u,x'})\right]\right\},
\end{eqnarray}
where $u^{\perp}$ denotes the orthogonal complementary space of $u$, i.e., $u^{\perp}:=\{z\in\mathbb{R}^n:\;z\cdot u=0\}$; $K|_{u^{\perp}}$ denotes the orthogonal projection
of $K$ onto $u^{\perp}$, i.e.,
$$K|_{u^{\perp}}:=\{x'\in u^{\perp}:\;{\rm there\;exits\;some}\;r\in\mathbb{R}\;{\rm such\;that}\;x'+ru\in K\};$$ $K_{u,x'}$ denotes the intersection of $K$ and the straight line parallel to $u$ and passing through point $x'$, i.e., $$K_{u,x'}:=\{x\in K:\;x=x'+ru,\;r\in\mathbb{R}\}.$$
For simplicity of notation, we write $SK$, $K'$ and $K_{x'}$ instead of $S_{e_n}K$, $K|_{e_n^{\perp}}$ and $K_{e_n,x'}$, respectively.

Let $K\in\mathcal{S}_o\left(\mathbb{R}^n\right)$. Its {\it symmetric rearrangement} $K^{\star}$ is the closed centered ball whose volume agrees with $K$,
$$K^{\star}:=\left\{x\in\mathbb{R}^n:\;\omega_n\|x\|^n\leq\mathcal{H}^n(K)\right\}.$$
Let $f:\mathbb{R}^n\rightarrow[0,+\infty)$ be a nonnegative measurable function that {\it vanishes at infinity}, in the sense that all its positive level sets have finite
measure,
$$\mathcal{H}^n\left(\{x:\;f(x)>t\}\right)<\infty,\;\;{\rm for\;all}\;t>0.$$
We define the {\it symmetric decreasing rearrangement} $f^{\star}$ of $f$ by symmetrizing its the level sets,
\begin{eqnarray}\label{symmdecrerearr}
f^{\star}(x)=\int_0^{\infty}\chi_{\{f>t\}^{\star}}(x)dt,
\end{eqnarray}
where $\chi_E$ denote the characteristic function of $E\subset\mathbb{R}^n$, i.e.,
\begin{equation}
\chi_E(x)=\left\{\begin{aligned}
&1&&{\rm if}\;x\in E,\\
&0&&{\rm otherwise}.
\end{aligned} \right.
\end{equation}

The following lemma provides some properties on the Steiner symmetrizations of star bodies.

\begin{lem}\label{Starbodies}\cite[Lemma 5.1]{LW23}
If $K\in\mathcal{S}_B(\mathbb{R}^n)$, then $S_uK\in \mathcal{S}_B(\mathbb{R}^n)$ for every $u\in\mathbb{S}^{n-1}$.
\end{lem}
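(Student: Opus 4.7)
The plan is to exhibit a ball $B_o(r')$ such that $SK := S_u K$ is a star body with respect to every point of $B_o(r')$. After a rotation I may assume $u = e_n$. Set $L(x') := \mathcal{H}^1(K_{e_n, x'})$, so that
\begin{equation*}
SK = \bigl\{(x', x_n) \in \mathbb{R}^{n-1} \times \mathbb{R} : |x_n| \leq L(x')/2\bigr\}.
\end{equation*}
Fix $r > 0$ with $B_o(r) \subset \mathrm{int}(K)$ and with $K$ star-shaped from each point of $B_o(r)$. Since Steiner symmetrization of a centered ball returns the ball itself and is monotone under inclusion, $B_o(r) \subset SK$, so the natural candidate for the kernel is $B_o(r)$ itself.

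The core of the argument is the one-dimensional chord-length estimate
\begin{equation*}
L\bigl((1-t)z' + t x'\bigr) \;\geq\; t\, L(x') + 2(1-t)\sqrt{r^2 - \|z'\|^2}
\end{equation*}
for every $z' \in \mathbb{R}^{n-1}$ with $\|z'\| < r$, every $x' \in \mathbb{R}^{n-1}$, and every $t \in [0,1]$. To derive it I would fix $w_n$ with $|w_n| < \sqrt{r^2 - \|z'\|^2}$, so that $(z', w_n) \in B_o(r)$, and exploit star-shapedness of $K$ from $(z', w_n)$: for each $s \in K_{e_n, x'}$ the point $((1-t)z' + tx', (1-t)w_n + ts)$ lies in $K$, hence $K_{e_n,(1-t)z' + tx'} \supset (1-t)w_n + t K_{e_n, x'}$. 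Letting $w_n$ range over $\bigl(-\sqrt{r^2 - \|z'\|^2}, \sqrt{r^2 - \|z'\|^2}\bigr)$, the chord contains the Minkowski sum of $t K_{e_n, x'}$ with an interval of length $2(1-t)\sqrt{r^2 - \|z'\|^2}$, and the one-dimensional Brunn-Minkowski inequality yields the claimed bound.

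With this estimate in hand, star-shapedness of $SK$ from any $(z', z_n) \in B_o(r)$ is a direct check: for $(x', x_n) \in SK$,
\begin{equation*}
\bigl|(1-t)z_n + t x_n\bigr| \;\leq\; (1-t)|z_n| + \tfrac{t}{2} L(x') \;\leq\; \tfrac{1}{2} L\bigl((1-t)z' + tx'\bigr),
\end{equation*}
using $|z_n| < \sqrt{r^2 - \|z'\|^2}$ together with the chord-length estimate; this is precisely the condition that the segment from $(z', z_n)$ to $(x', x_n)$ lies in $SK$.

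It remains to verify positivity and continuity of $\rho_{SK, z}$ on $B_o(r) \times \mathbb{S}^{n-1}$. Positivity is automatic from $B_o(r) \subset \mathrm{int}(SK)$. For joint continuity, the inclusion $\mathrm{conv}(B_o(r) \cup \{y\}) \subset SK$ for every $y \in SK$, which follows from the star-shapedness just established, shows that open segments from points of $B_o(r)$ to boundary points of $SK$ lie in $\mathrm{int}(SK)$; together with compactness of $SK$ this yields joint upper and lower semicontinuity of the radial function by a standard argument. I expect the chord-length estimate, and in particular the appeal to one-dimensional Brunn-Minkowski to cover the case in which the shifted copies of the chord are disjoint, to be the main technical obstacle; the remaining steps are essentially bookkeeping.
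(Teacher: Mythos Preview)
The paper does not supply a proof of this lemma; it merely cites \cite[Lemma~5.1]{LW23}. So there is no in-paper argument to compare against, and your proposal stands as an independent proof. The core of your argument---the chord-length inequality
\[
L\bigl((1-t)z'+tx'\bigr)\;\ge\;tL(x')+2(1-t)\sqrt{r^2-\|z'\|^2}
\]
obtained by combining star-shapedness from each point of $B_o(r)$ with the one-dimensional Brunn--Minkowski inequality, and the subsequent verification that $SK$ is star-shaped from every point of $B_o(r)$---is correct and is the natural route to the result.

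There is one point that needs an extra line. In the final paragraph you invoke ``compactness of $SK$'' to run the standard semicontinuity argument for the radial function, but you have not checked that $SK$ is closed. Closedness of $SK$ is exactly upper semicontinuity of $x'\mapsto L(x')$, and your chord-length estimate does not yield this at points $x'_0$ with $\|x'_0\|\ge r$ when the approaching sequence $x'_k$ comes from a direction whose line through $x'_0$ misses $B_o^{n-1}(r)$. A short fix is available using only that $\rho_K$ is continuous: set $f(x):=\rho_K(x/\|x\|)-\|x\|$ on $\mathbb{R}^n\setminus\{o\}$, so that $K_{e_n,x'}=\{t:f(x',t)\ge 0\}$. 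For $x'_0\ne 0$ the function $f$ is uniformly continuous on a compact neighbourhood of $\{x'_0\}\times[-M,M]$, whence for every $\varepsilon>0$ and $k$ large one has $\{f(x'_k,\cdot)\ge 0\}\subset\{f(x'_0,\cdot)\ge-\varepsilon\}$; letting $\varepsilon\downarrow 0$ gives $\limsup_k L(x'_k)\le L(x'_0)$. The case $x'_0=0$ is handled by the same idea after splitting off a small interval around $t=0$. With upper semicontinuity of $L$ in hand, $SK$ is closed and your cone argument for continuity of $\rho_{SK,z}$ goes through as written.
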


\begin{lem}\label{conHaurRad}\cite[Theorem 2.2]{LW23}
Let $K,K_i\in\mathcal{S}_B(\mathbb{R}^n)$, $i\in\mathbb{N}$. Then, the fact that $K_i$ converges to $K$ in Hausdorff distance is equivalent to the fact that $K_i$ converges to $K$ in radial distance.
\end{lem}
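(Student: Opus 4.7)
The plan is to prove the two implications separately; the star-with-respect-to-a-ball hypothesis is essential in the direction from Hausdorff to radial convergence. The implication ``radial convergence $\Rightarrow$ Hausdorff convergence'' is essentially a single estimate. Setting $\varepsilon_i := \|\rho_{K_i} - \rho_K\|_{\infty}$, for each $u \in \mathbb{S}^{n-1}$ the point $\min(\rho_K(u), \rho_{K_i}(u))\, u$ lies in $K \cap K_i$ and is within Euclidean distance $\varepsilon_i$ of both $\rho_K(u) u$ and $\rho_{K_i}(u) u$. This gives $K \subset K_i + B_o(\varepsilon_i)$ and $K_i \subset K + B_o(\varepsilon_i)$, hence $d_H(K_i, K) \leq \varepsilon_i \to 0$.

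For the reverse implication, I would combine a uniform Lipschitz bound on the family $\{\rho_{K_i}\}$ with pointwise convergence, and then conclude by Arzel\`a--Ascoli on the compact sphere. Hausdorff convergence yields a uniform bound $K_i \subset B_o(R)$, and the definition of $\mathcal{S}_B(\mathbb{R}^n)$ supplies a common $B_o(r) \subset K_i$. I would first establish that each $\rho_{K_i}$ is Lipschitz on $\mathbb{S}^{n-1}$ with a constant $L = L(r,R)$ independent of $i$. The geometric fact behind this is that for every $w \in K_i$, the convex hull $\mathrm{conv}(\{w\}\cup B_o(r))$ is contained in $K_i$ (since $K_i$ is star-shaped with respect to every point of $B_o(r)$); intersecting such a cone, with apex $\rho_{K_i}(u) u$, with a nearby ray along $v$ and carrying out a short trigonometric estimate gives $|\rho_{K_i}(u) - \rho_{K_i}(v)| \leq C(r,R)\,\|u-v\|$.

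For pointwise convergence at a fixed $u$, Hausdorff convergence produces $z_i \in K_i$ with $z_i \to \rho_K(u) u$; decomposing $z_i = a_i u + b_i$ with $b_i \perp u$, one has $a_i \to \rho_K(u)$ and $\|b_i\| \to 0$. The cone $\mathrm{conv}(\{z_i\} \cup B_o(r)) \subset K_i$ contains the ray point $t_i u$ obtained by taking the extremal convex combination coefficient $s = r/(r + \|b_i\|)$, which kills the orthogonal component and yields $t_i = a_i r/(r + \|b_i\|) \to \rho_K(u)$. Hence $\liminf_i \rho_{K_i}(u) \geq \rho_K(u)$, and the symmetric argument with the roles of $K$ and $K_i$ swapped gives $\limsup_i \rho_{K_i}(u) \leq \rho_K(u)$. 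Equicontinuity together with pointwise convergence on the compact set $\mathbb{S}^{n-1}$ upgrades this to uniform convergence, i.e., radial convergence.

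The main obstacle is the projection-onto-ray step just described: translating an ambient Euclidean bound $\|z_i - \rho_K(u) u\| \leq \delta$ into a comparable one-sided estimate along the fixed direction $u$. This is exactly where the ``star-with-respect-to-a-ball'' hypothesis is used---star-shapedness with respect to a single point would not suffice to reroute a perturbed point $z_i$ back onto the ray while remaining in $K_i$, because we need a full solid cone of directions emanating from $z_i$ to be contained in $K_i$ in order to absorb the orthogonal displacement $b_i$.
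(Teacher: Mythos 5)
The paper gives no proof of this lemma at all---it is quoted verbatim from \cite[Theorem 2.2]{LW23}---so the only comparison available is with the cited source; your argument is correct and is the standard one for this equivalence. Both implications check out: the one-line estimate via $\min(\rho_K(u),\rho_{K_i}(u))\,u$ for radial $\Rightarrow$ Hausdorff (extended from boundary points to all of $K$ in the obvious way), and for the converse the cone inclusion $\mathrm{conv}(\{w\}\cup B_o(r))\subset K_i$ yielding an $i$-independent Lipschitz bound on $\rho_{K_i}$ together with the pointwise limit $t_i=a_ir/(r+\|b_i\|)\to\rho_K(u)$, upgraded to uniform convergence by equicontinuity. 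You are also right to emphasize that the converse genuinely needs the bodies to be star-shaped with respect to a \emph{common} ball $B_o(r)$: if the inner radius were allowed to shrink with $i$, a unit ball with an ever-narrower divot of fixed depth converges in Hausdorff but not in radial distance, so the fixed-$r$ reading of $\mathcal{S}_B(\mathbb{R}^n)$ (which is how the paper's definition is phrased) is essential.
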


\begin{lem}\label{convergesstarbodies}\cite[Theroem 2.3]{LW23}
If $K\in\mathcal{S}_B(\mathbb{R}^n)$ and $T$ is a dense subset of $\mathbb{S}^{n-1}$, then there is a sequence $\{u_i\}\subset T$ such that
$K_i:=S_{u_i}\cdots S_{u_1}K$ converges to $K^{\star}$ in radial distance.
\end{lem}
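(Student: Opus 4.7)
I would follow the standard paradigm for convergence of iterated Steiner symmetrizations: exploit the density of $T$ to construct a sequence of directions visiting every $u\in T$ infinitely often, extract a subsequential limit by compactness, and identify it with $K^\star$ using a strictly Steiner-monotone integral functional.

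\emph{Construction and precompactness.} Since $T$ is dense in the separable space $\mathbb S^{n-1}$, pick a countable dense $\{v_j\}_{j\geq 1}\subset T$ and form $\{u_i\}$ by concatenating the blocks $(v_1),(v_1,v_2),(v_1,v_2,v_3),\ldots$, so that each $v_j$ occurs at infinitely many positions. Set $K_i:=S_{u_i}\cdots S_{u_1}K$; by Lemma~\ref{Starbodies} each $K_i\in\mathcal S_B(\mathbb R^n)$. Because $K$ is a compact star body with respect to a ball, $B_o(r)\subset K\subset B_o(R)$ for some $0<r<R$; centered balls are invariant under every $S_u$ and $S_u$ is inclusion-preserving, so $B_o(r)\subset K_i\subset B_o(R)$ for all $i$. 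This sandwich gives a uniform Lipschitz bound on $\{\rho_{K_i}\}$ on $\mathbb S^{n-1}$, and Arzel\`a-Ascoli together with Lemma~\ref{conHaurRad} yield precompactness of $\{K_i\}$ in the radial (equivalently Hausdorff) distance.

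\emph{Monotone functional and identification of the limit.} Introduce $\Phi(L):=\int_L\|x\|^2\,dx$. Writing $x=x'+tu$ with $x'\in u^\perp$ and using that among subsets of $\mathbb R$ of prescribed length the symmetric interval about $0$ uniquely minimizes $\int_A t^2\,dt$, Fubini gives
\[
\Phi(S_uL)\leq\Phi(L),\qquad\text{with equality iff }S_uL=L.
\]
Thus $\Phi(K_i)$ is non-increasing and converges to some $\Phi_\infty$; continuity of $\Phi$ on uniformly bounded star bodies forces $\Phi(K_\infty)=\Phi_\infty$ for every subsequential limit $K_\infty$. Fixing such a $K_\infty$ and any $j$, I would argue that a subsequence inside $I_j:=\{i:u_{i+1}=v_j\}$ (which is infinite by construction) still converges to $K_\infty$, using summability $\sum_i(\Phi(K_i)-\Phi(K_{i+1}))<\infty$ paired with a quantitative strict-monotonicity estimate to keep consecutive iterates close in radial distance. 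Continuity of $S_{v_j}$ in $K$ then gives $K_{i_k+1}=S_{v_j}K_{i_k}\to S_{v_j}K_\infty$, while $\Phi(K_{i_k+1})\to\Phi_\infty=\Phi(K_\infty)$, so $\Phi(S_{v_j}K_\infty)=\Phi(K_\infty)$ and strict monotonicity forces $S_{v_j}K_\infty=K_\infty$. Density of $\{v_j\}$ in $\mathbb S^{n-1}$ and continuity of $u\mapsto S_uK_\infty$ extend the invariance to every $u\in\mathbb S^{n-1}$; the only star body fixed by all Steiner symmetrizations is a ball centered at $o$, and volume preservation forces $K_\infty=K^\star$. Thus every subsequence of $\{K_i\}$ admits a further subsequence converging to $K^\star$ in radial distance, so $K_i\to K^\star$.

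\textbf{Main obstacle.} The delicate step is the alignment claim: given an arbitrary subsequential limit $K_\infty$, producing a subsequence inside $I_j$ that still converges to $K_\infty$ (rather than to some other subsequential limit). This needs the summability of the $\Phi$-defect paired with a quantitative stability form of strict Steiner monotonicity, i.e.\ an inequality of the type $\Phi(K)-\Phi(S_uK)\geq c\,d(K,S_uK)^{\alpha}$ uniformly on the bounded family, which guarantees $d(K_i,K_{i+1})\to 0$ and hence that subsequential limits along $I_j$ and along all of $\mathbb N$ coincide. The remaining ingredients—continuity of $S_u$ jointly in $K$ and $u$, rigidity of the ball under the full family of Steiner symmetrizations, and volume preservation—are standard.
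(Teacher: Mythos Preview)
The paper does not prove this lemma; it is quoted from \cite[Theorem~2.3]{LW23}, so there is no in-paper argument to compare against. On its own merits your plan follows the familiar ``monotone functional plus compactness'' paradigm and has the right shape, but two steps are understated.

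First, the sandwich $B_o(r)\subset K_i\subset B_o(R)$ alone does \emph{not} yield a uniform Lipschitz bound on the radial functions: a star body trapped between two concentric balls can still have arbitrarily rough boundary. What you need (and should state) is that Steiner symmetrization preserves star-shapedness with respect to the \emph{same} ball $B_o(r)$---not merely with respect to \emph{some} ball, which is all Lemma~\ref{Starbodies} asserts---so that the uniform interior-cone condition, and hence equicontinuity of $\{\rho_{K_i}\}$, is inherited along the iteration. Second, the ``alignment'' obstacle you flag is genuine, and your proposed cure is not enough as written: the quantitative stability inequality $\Phi(K)-\Phi(S_uK)\ge c\,d(K,S_uK)^\alpha$, uniform in $u$ and over the family, is itself a nontrivial theorem you would have to supply, and even granting it, summability of the $\Phi$-defects only gives $d(K_i,K_{i+1})\to 0$, which does not force subsequential limits along $I_j$ to coincide with those along $\mathbb N$, because the gaps between successive occurrences of $v_j$ in your block sequence grow without bound. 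A cleaner route that avoids the alignment issue entirely is to choose the directions adaptively rather than by a fixed block pattern: at step $i$ pick $u_{i+1}\in T$ with $\Phi(S_{u_{i+1}}K_i)\le\inf_{u\in\mathbb S^{n-1}}\Phi(S_uK_i)+2^{-i}$ (possible by density of $T$ and continuity of $u\mapsto\Phi(S_uK_i)$). Then for any subsequential limit $L$ and any direction $u$ one has $\Phi(S_uL)=\lim_k\Phi(S_uK_{i_k})\ge\lim_k\bigl(\Phi(K_{i_k+1})-2^{-i_k}\bigr)=\Phi(L)$, forcing $S_uL=L$ for every $u$ and hence $L=K^\star$; since every subsequential limit is $K^\star$, the full sequence converges.
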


The following lemma characterizes the structures of the boundaries of  star bodies with respect to a ball.
\begin{lem}\label{Starbodyboundary} \cite[Theorem 3.1]{LX22} Let $K\in\mathcal{S}_B(\mathbb{R}^n)$. Then, for almost all $u \in \mathbb{S}^{n-1}$, there is a sequence of disjoint open subsets $G_m \subset K^{\prime}$, and two sequences of graph functions
$f_{m,j}, g_{m,j}: G_m \rightarrow \mathbb{R},\;1\leq j\leq m,$ satisfying

\noindent (i) $\bigcup_{m=1}^{\infty} G_m$ is open dense in $K^{\prime}$, and $f_{m,1}<g_{m,1}<\cdots<f_{m,j}<g_{m,j}$;

\noindent (ii) $K$ has the representation (if we neglect an $\mathcal{H}^n$-null set)
$$
K=\bigcup_{m=1}^{\infty} \bigcup_{\substack{x^{\prime} \in G_m \\ 1 \leq j \leq m}}\left\{(x',x_n):f_{m,j}(x')\leq x_n\leq g_{m,j}(x')\right\}
$$
and $\partial K$ has the representation (if we neglect an $\mathcal{H}^{n-1}$-null set)
$$
\partial K=\bigcup_{m=1}^{\infty} \bigcup_{j=1}^m\left\{\left(x^{\prime}, f_{m,j}\left(x^{\prime}\right)\right): x^{\prime} \in G_m\right\} \cup\left\{\left(x^{\prime}, g_{m,j}\left(x^{\prime}\right)\right): x^{\prime} \in G_m\right\};
$$
(iii) $f_{m,j}$, $g_{m,j}$ are differentiable at each $x'\in G_m$, and
$$\nu^K(x',f_{m,j}(x'))=\frac{(\nabla f_{m,j}(x'),-1)}{\sqrt{1+\left|\nabla f_{m,j}(x')\right|^2}},\;\;\;\nu^K(x',g_{m,j}(x'))=\frac{(-\nabla g_{m,j}(x'),1)}{\sqrt{1+\left|\nabla g_{m,j}(x')\right|^2}}.$$
\end{lem}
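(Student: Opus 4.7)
The plan is to combine the radial regularity of $K$ coming from its star-shapedness with respect to the ball $B_o(r)$ with a Sard-type transversality argument for the orthogonal projection in direction $u$. After an ambient rotation it suffices to treat $u=e_n$ and the vertical projection $\pi(x',x_n)=x'$.

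First I would exploit $K\in\mathcal{S}_B(\mathbb{R}^n)$ to show that $\partial K$ carries enough regularity for a coarea argument. For every $z\in B_o(r)$, the map $v\mapsto z+\rho_{K,z}(v)\,v$ is a continuous bijection of $\mathbb{S}^{n-1}$ onto $\partial K$, so $\partial K$ is a topological $(n-1)$-sphere; by comparing the radial parameterizations for different centers $z\in B_o(r)$, standard arguments give that $\partial K$ is $(n-1)$-rectifiable and admits an $\mathcal{H}^{n-1}$-a.e.\ defined outer unit normal $\nu^K$. This is where the \emph{ball} star-shapedness hypothesis, as opposed to only $\mathcal{S}_o$, is used essentially.

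Next I would apply Sard's theorem for Lipschitz/rectifiable maps to $\pi|_{\partial K}$ to conclude that for $\mathcal{H}^{n-1}$-a.a.\ directions $u\in\mathbb{S}^{n-1}$, the critical set of $\pi|_{\partial K}$ (points where $\nu^K$ is perpendicular to $u$) projects to an $\mathcal{H}^{n-1}$-null subset of $u^\perp$. At every regular value $x'\in K'$, the line $\ell_{x'}=\{x'+t e_n:t\in\mathbb{R}\}$ meets $\partial K$ in finitely many transverse points which, by orientation, must come in pairs; denote their number by $2m$ and set
\[
G_m:=\{\,x'\in\mathrm{int}(K'):\ell_{x'}\cap\partial K\text{ consists of exactly }2m\text{ regular points}\,\}.
\]
At each such crossing the implicit function theorem yields a local graph description of $\partial K$ over a neighborhood of $x'$ in $u^\perp$, so transverse crossings persist and remain in bijection under small perturbations of $x'$; hence each $G_m$ is open, and the complement of $\bigcup_m G_m$ in $K'$ is contained in the union of the $\mathcal{H}^{n-1}$-null critical projection and $\partial K'$, which gives the density half of (i).

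Finally, on each connected component of $G_m$ I would label the $2m$ intersections in increasing vertical order and define the graph functions so that $f_{m,1}<g_{m,1}<\cdots<f_{m,m}<g_{m,m}$. The vertical-slab decomposition of $K$ and $\partial K$ asserted in (ii) is then immediate, while (iii) reduces to the standard formulas for the outward unit normal to the graph of a differentiable function, with the orientation chosen so that $f_{m,j}$ parameterizes the lower face of a slab (normal pointing downward) and $g_{m,j}$ the upper face (normal pointing upward). In my view the main obstacle is the density assertion in (i): one must rule out a pathological tangency set of positive $\mathcal{H}^{n-1}$-measure in $u^\perp$, and this is exactly where the strengthened hypothesis $K\in\mathcal{S}_B$ is indispensable, since it propagates the radial control from all of $B_o(r)$ onto $\partial K$ and thereby keeps the critical set of $\pi|_{\partial K}$ small for generic $u$.
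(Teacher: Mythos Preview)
The paper does not supply its own proof of this lemma: it is quoted verbatim from \cite[Theorem~3.1]{LX22} and used as a black box. There is therefore no in-paper argument to compare your proposal against.

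Your outline is a reasonable strategy and broadly matches how such structure results are typically obtained: the key input is that $K\in\mathcal{S}_B(\mathbb{R}^n)$ forces the radial function to be locally Lipschitz (this equivalence is even recalled later in the paper, cf.\ the reference to \cite[Theorem~2.1]{LW23}), so $\partial K$ is a Lipschitz hypersurface and the coarea/Sard machinery applies. One point you should tighten is the apparent tension between the \emph{openness} of $G_m$ and the requirement in (iii) that $f_{m,j},g_{m,j}$ be differentiable \emph{at every} $x'\in G_m$. Your definition of $G_m$ requires all $2m$ crossings to be regular (i.e.\ $\nu^K$ exists and is transverse), and you then invoke the implicit function theorem to argue openness; but for a merely Lipschitz boundary, Clarke's IFT gives a Lipschitz local graph, not a $C^1$ one, so differentiability of the graph functions at \emph{nearby} base points is not automatic. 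Either you must argue that the set of differentiability points can be taken open after possibly shrinking $G_m$ (which needs justification), or---more likely what \cite{LX22} actually does---you accept $G_m$ open with Lipschitz graph functions and read ``differentiable at each $x'\in G_m$'' as holding $\mathcal{H}^{n-1}$-a.e., which is all that the subsequent integral computations in this paper require.
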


The following well-known fact, which is provided by Lutwak, Yang and Zhang \cite{MR2563216}, establishes the  relationship between Steiner symmetrizations and polar bodies.

\begin{lem}\label{Lprojectionbody}\cite[Lemma 1.1.]{MR2563216}
For two convex bodies $K,L\in\mathcal{K}_o(\mathbb{R}^n)$,
$$S_{e_n}L^{\ast}\subset K^{\ast}$$
 if and only if
$$h_L(z',t)=h_L(z',-s)=1,\;{\rm with}\;t\neq -s\;\;\Longrightarrow\;\; h_K\left(z',\frac{s+t}{2}\right)\leq 1.$$ In addition, if $S_{e_n}L^{\ast}=K^{\ast}$, then
$h_K(z',\frac{s+t}{2})=1$ for any $(z',t),(z',-s)\in\mathbb{R}^{n-1}\times\mathbb{R}$ satisfying  $t\neq -s$ and $h_L(z',t)=h_L(z',-s)=1$.
\end{lem}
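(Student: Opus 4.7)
The plan is to reduce the inclusion $S_{e_n}L^{\ast}\subset K^{\ast}$ to a one-dimensional comparison on each line parallel to $e_n$, using the identification of polar bodies by support functions and the convexity of the resulting vertical sections. Since $K,L\in\mathcal{K}_o(\mathbb{R}^n)$, both $L^{\ast}$ and $K^{\ast}$ also belong to $\mathcal{K}_o(\mathbb{R}^n)$, so all such sections are closed convex intervals.

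Fix $z'\in\mathbb{R}^{n-1}$. By the definition of the polar body, $(z',\tau)\in L^{\ast}$ if and only if $h_L(z',\tau)\le 1$, and since $h_L(z',\cdot):\mathbb{R}\to\mathbb{R}$ is convex, the section $L^{\ast}_{z'}:=\{\tau\in\mathbb{R}:(z',\tau)\in L^{\ast}\}$ is a closed interval $[\tau_-(z'),\tau_+(z')]$; continuity of $h_L$ forces $h_L(z',\tau_\pm)=1$ whenever this interval has nonempty interior. Set $\ell(z'):=\tau_+-\tau_-$. By definition \eqref{DESteiner} of Steiner symmetrization, the vertical section of $S_{e_n}L^{\ast}$ at $z'$ equals the centered interval $[-\ell/2,\ell/2]$. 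Consequently, the inclusion $S_{e_n}L^{\ast}\subset K^{\ast}$ is equivalent to requiring $[-\ell/2,\ell/2]\subset K^{\ast}_{z'}$ for every $z'$, and since $K^{\ast}_{z'}$ is itself a closed interval, this reduces further to the simpler condition $h_K(z',\pm\ell/2)\le 1$. To rewrite this in the form of the statement, set $(t,-s)=(\tau_+,\tau_-)$: then $h_L(z',t)=h_L(z',-s)=1$, $t\ne -s$, and $(s+t)/2=\ell/2$; the swapped choice $(t,-s)=(\tau_-,\tau_+)$ yields $(s+t)/2=-\ell/2$. Together these recover both endpoints and yield the asserted equivalence. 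The degenerate $z'$ for which $L^{\ast}_{z'}$ is empty or a single point (where the hypothesis $t\ne -s$ is vacuous) are handled by approximation from the interior of $L^{\ast}|_{e_n^{\perp}}$ and continuity of $h_K$.

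For the ``in addition'' assertion, equality $S_{e_n}L^{\ast}=K^{\ast}$ forces $K^{\ast}_{z'}=[-\ell/2,\ell/2]$, so the endpoints $\pm\ell/2$ lie on $\partial K^{\ast}_{z'}$ and $h_K(z',(s+t)/2)=1$ for the admissible pairs. The main subtlety I anticipate is the case in which $\{h_L(z',\cdot)=1\}$ is a full subinterval rather than two points---this occurs precisely when $L^{\ast}$ possesses a vertical facet at $z'$, producing many admissible pairs $(t,-s)$; since Steiner symmetrization preserves the orthogonal projection onto $e_n^{\perp}$, $K^{\ast}$ inherits a matching vertical facet at the same $z'$, and $h_K(z',\mu)=1$ persists for every $\mu$ in the resulting symmetric interval. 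Beyond this detail, the argument is essentially bookkeeping of convex sections and support functions.
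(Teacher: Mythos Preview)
The paper does not supply its own proof of this lemma; it simply quotes it from \cite[Lemma 1.1]{MR2563216}. Your argument is correct and is essentially the standard one: identify $L^{\ast}=\{h_L\le 1\}$, read off the vertical sections as the sublevel intervals of the convex functions $h_L(z',\cdot)$, and compare the centred intervals produced by Steiner symmetrisation with the sections $K^{\ast}_{z'}$. Two small remarks. First, for the forward implication you only check the extremal pair $(t,-s)=(\tau_+,\tau_-)$, but an arbitrary admissible pair has $t,-s\in[\tau_-,\tau_+]$ and hence $(s+t)/2\in[-\ell/2,\ell/2]\subset K^{\ast}_{z'}$, so the general case follows immediately; it would be cleaner to say this explicitly. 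Second, your phrase ``vertical facet'' is slightly imprecise: what actually happens when $\{h_L(z',\cdot)=1\}$ is a nondegenerate interval is that $z'$ lies on the boundary of the projection $L^{\ast}|_{e_n^{\perp}}$ (a supporting hyperplane at any interior point of that vertical boundary segment must be vertical, hence supports the projection at $z'$). Since Steiner symmetrisation preserves this projection, $z'$ is still a boundary point of $K^{\ast}|_{e_n^{\perp}}$, the entire fibre $\{z'\}\times[-\ell/2,\ell/2]$ lies in $\partial K^{\ast}$, and $h_K(z',\mu)=1$ throughout, exactly as you need. With these clarifications your proof is complete.
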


\subsection{Basic facts from Spherical convex geometry}
Let $\mathbb{R}^{n+1}$ denote $(n+1)$-dimensional Euclidean space. Let $\mathbb{R}^n:=\left\{x \in \mathbb{R}^{n+1}: x\cdot e_{n+1}=0\right\}$. Let $o$ denote the origin of $\mathbb{R}^{n+1}$. Let $e_1,\cdots, e_n,e_{n+1}$ denote the standard orthonormal basis of $\mathbb{R}^{n+1}$.
We denote the Euclidean unit sphere in $\mathbb{R}^{n+1}$ by $\mathbb{S}^n, n \geq 2$. For $u\in\mathbb{S}^n$, let $\mathbb{S}_u$ denote the set $\{v\in\mathbb{S}^n:\;v\cdot u=0\}$. Let $\mathbb{S}_u^{+}$ denote the set $\{v\in\mathbb{S}^n:\;v\cdot u>0\}$ and $\mathbb{S}_u^{-}$ denote the set $\{v\in\mathbb{S}^n:\;v\cdot u<0\}$. To simplify notation, we let $\mathbb{S}^{n-1}$ denote $\mathbb{S}_{e_{n+1}}$ and $\mathbb{S}^n_{\pm}$ denote $\mathbb{S}^{\pm}_{e_{n+1}}$. A set $A \subseteq \mathbb{S}^n$ is called {\it (spherical) convex} if its radial extension
$$
\operatorname{rad} A=\left\{r v \in \mathbb{R}^{n+1}: r \geq 0 \text { and } v \in A\right\}
$$
is convex in $\mathbb{R}^{n+1}$.
A closed convex subset of $\mathbb{S}^n$ is called a {\it(spherical) convex body}. The set of convex bodies is denoted by $\mathcal{K}\left(\mathbb{S}^n\right)$. Furthermore, the set of convex bodies contained in $\mathbb{S}^n_+$ with $e_{n+1}$ as its interior point is denoted by $\mathcal{K}_o\left(\mathbb{S}_+^n\right)$. And the set of convex bodies contained in $\mathbb{S}^n_-$ with $-e_{n+1}$ as its interior point is denoted by $\mathcal{K}_o\left(\mathbb{S}_-^n\right)$.
%

The natural spherica distance $d_s$ is given by $d_s(u, v)=\arccos (u \cdot v)$ for $u, v \in \mathbb{S}^n$. For spherical compact sets $K,L\subset\mathbb{S}^n$,  the {\it spherical Hausdorff distance} of $K$ and $L$ is defined by
\begin{eqnarray}
d_s(K,L):=\inf\left\{r>0:\;K\subseteq L_{r}\;\;{\rm and}\;\;L\subseteq K_r\right\},
\end{eqnarray}
where $L_r$ denotes the spherical parallel set of $L$, which is defined by
$$L_r:=\{w\in\mathbb{S}^n:\;{\rm there\;exists}\;v\in L\;{\rm such\;that}\;d_s(w,v)\leq r\}.$$
 Let $B_s(\alpha)$ denote the spherical cap of radius $\alpha\in(0,\pi/2)$ and center $e_{n+1}$ in $\mathbb{S}^n$, i.e.,
$$B_s(\alpha):=\{v\in\mathbb{S}^n:\;d_s(v,e_{n+1})\leq \alpha\}.$$

The {\it convex hull} $\operatorname{conv} A$ of $A \subseteq \mathbb{S}^n$ is the intersection of all convex bodies in $\mathbb{S}^n$ that contain $A$. The convex hull of two spherical convex bodies $K, L$ is denoted by $\operatorname{conv}(K, L)$, i.e., $$\operatorname{conv}(K, L):=\operatorname{conv}(K \cup L).$$ The segment spanned by two points $u, v \in \mathbb{S}^n, u \neq-v$, is given by $\operatorname{conv}(u, v)=\operatorname{conv}(\{u\},\{v\})$.  A $k$-sphere $S$, $k \in\{0, \ldots, n\}$, is the intersection of a $(k+1)$-dimensional linear subspace of $\mathbb{R}^{n+1}$ with $\mathbb{S}^n$.   Let $S$ be a $k$-sphere and let $K \in \mathcal{K}\left(\mathbb{S}^n\right)$. Then the {\it spherical projection} $K \mid S$ is defined by
$$
K \mid S:=\operatorname{conv}\left(S^{\circ},K\right) \cap S,
$$
where $S^{\circ}:=\{w\in\mathbb{S}^n:\;w\cdot u=0\;{\rm for\;all}\;u\in S\}$. The spherical projection about point is given by $x\mid S:=\{x\}\mid S$.

\begin{definition} For $K \in \mathcal{K}_o\left(\mathbb{S}_+^n\right)$, the {\it spherical support function} $h(K, \cdot): \mathbb{S}^{n-1} \rightarrow$ $\left(0, \frac{{\pi}}{2}\right)$ of $K$ is defined by
\begin{eqnarray}
h_s(K, v)=\max \left\{ \operatorname{sgn}(v\cdot w)d_s\left(e_{n+1}, w \mid \mathbb{S}_{e_{n+1}, v}^1\right): w \in K\right\},\;\;\;v\in\mathbb{S}^{n-1},
\end{eqnarray}
where $\mathbb{S}_{e_{n+1}, v}^1$ denotes the 1-sphere spanned by $e_{n+1}$ and $v$.
\end{definition}

The intuitive interpretation of the spherical support function is as follows: If $v \in \mathbb{S}^{n-1}$ then the projection $K \mid \mathbb{S}_{e_{n+1}, v}^1$ is a spherical segment and the spherical support function measures the width along the direction $v$ with respect to $e_{n+1}$. We have
$$
K \mid \mathbb{S}_{e_{n+1}, v}^1=\left\{\cos (\alpha) e_{n+1}+\sin (\alpha) v: \alpha \in\left[-h(K,-v), h(K, v)\right]\right\}.
$$

\begin{definition}\label{DSRF} For $K\in\mathcal{K}_o(\mathbb{S}_+^n)$, its {\it spherical radial function} is defined by
\begin{eqnarray}\label{dsrf}
\rho_s(K,v):=\max\left\{\operatorname{sgn}(v\cdot w)d_s\left(e_{n+1},w\right):w\in K\cap\mathbb{S}^1_{e_{n+1},v}\;\right\},\;\;\;v\in\mathbb{S}^{n-1}.
\end{eqnarray}
\end{definition}

\begin{definition}\label{spb} For $K\in\mathcal{K}_o(\mathbb{S}_+^n)$, its {\it spherical polar body} $K^{\circ}$ is defined by
\begin{eqnarray}\label{EDSPolarbody}
K^{\circ}=\{v\in\mathbb{S}^n:\;v\cdot w\leq 0\;\;{\rm for\;all}\;w\in K\}.
\end{eqnarray}
\end{definition}
By the above definition, if $K\in\mathcal{K}_o(\mathbb{S}_+^n)$, then $K^{\circ}\in\mathcal{K}_o(\mathbb{S}_-^n)$.

\begin{definition}\label{Dgnom} The {\it gnomonic projection} $g: \mathbb{S}^n_+ \rightarrow \mathbb{R}^n$ is defined by
$$
g(v):=\frac{v}{e_{n+1} \cdot v}-e_{n+1}.
$$
The {\it inverse gnomonic projection} $g^{-1}: \mathbb{R}^n \rightarrow \mathbb{S}^n_+$ is defined by
$$g^{-1}(x):=\frac{x+e_{n+1}}{\|x+e_{n+1}\|}.$$
\end{definition}

By the definition of gnomonic projection of $K\in\mathcal{K}_o(\mathbb{S}_+^n)$, the following equalities show the relations between spherical support function (spherical radial function) of $K$ and the Euclidean support function (Euclidean radial function) of $g(K)$:
\begin{eqnarray}\label{EhgK}
h(g(K),v)=\tan h_s(K,v),\;\;\;v\in \mathbb{S}^{n-1},
\end{eqnarray}
\begin{eqnarray}\label{ErhogK}
\rho(g(K),v)=\tan \rho_s(K,v),\;\;\;v\in \mathbb{S}^{n-1}.
\end{eqnarray}
Moreover, it is easily to prove that for $K\in\mathcal{K}_o(\mathbb{S}_+^n)$,
\begin{eqnarray}\label{EgKast}
g(K)^{\ast}=-g(-K^{\circ}).
\end{eqnarray}
By (\ref{Edoublestar}) and (\ref{EgKast}), for $K\in\mathcal{K}_o(\mathbb{S}_+^n)$, we have
\begin{eqnarray}
\left(K^{\circ}\right)^{\circ}=K.
\end{eqnarray}
By (\ref{EhgK}), (\ref{ErhogK}) and (\ref{EgKast}),
$$\tan h_s(K,v)=h(g(K),v)=\frac{1}{\rho(g(K)^{\ast},v)}=\frac{1}{\rho(g(-K^{\circ}),-v)}=\frac{1}{\tan \rho_s(-K^{\circ},-v)}.$$
Therefore, for $K\in\mathcal{K}_o(\mathbb{S}_+^n)$ and $v\in\mathbb{S}^{n-1}$,
\begin{eqnarray}\label{Rhandrho}
h_s(K,v)+\rho_s(-K^{\circ},-v)=\frac{{\pi}}{2}.
\end{eqnarray}

By Definition \ref{Dgnom} and the definitions of star bodies in $\mathbb{R}^n$, we define spherical star bodies as follows.

\begin{definition}\label{ssb}
For a spherical compact set $K\subset\mathbb{S}_+^n$, if its gnomonic projection $g(K)$ is a star body with respect to $o$ in $\mathbb{R}^n$,
 then $K$ is called as {\it spherical star body} with respect to $e_{n+1}$. If $g(K)$ is a star body with respect to a ball $B_o(\tan \alpha)$ in $\mathbb{R}^n$, then $K$ is called as {\it spherical star body} with respect to a spherical cap $B_s(\alpha)$.
\end{definition}

The set of spherical star bodies with respect to $e_{n+1}$ is denoted
  by $\mathcal{S}_o(\mathbb{S}_+^n)$. The set of spherical star bodies with respect to $B_s(\alpha)$ is denoted
  by $\mathcal{S}_B(\mathbb{S}_+^n)$. Similarly, $\mathcal{K}_o(\mathbb{S}_+^n)\subset\mathcal{S}_B(\mathbb{S}_+^n)$. For $K\in \mathcal{S}_o(\mathbb{S}_+^n)$, its spherical radial function $\rho_K$ can be defined as in (\ref{dsrf}).

\begin{definition} For $\bar{K}\in\mathcal{K}_o(\mathbb{R}^n)$, its {\it spherical measure} is defined by
\begin{eqnarray}\label{Ms}
M_s(\bar{K}):=\int_{\bar{K}}\left(1+\|x\|^2\right)^{-\frac{n+1}{2}}dx.
\end{eqnarray}
\end{definition}

By \cite[Lemma 2.3]{BHPS23}, the spherical measure of $\bar{K}\in\mathcal{K}_o(\mathbb{R}^n)$ equals the $n$-Hausdorff measure of its inverse gnomonic projection $\mathcal{H}^n\left(g^{-1}(\bar{K})\right)$. Thus, for $K\in\mathcal{K}_o(\mathbb{S}_+^n)$, we have
\begin{eqnarray}\label{EKandgK}
\mathcal{H}^n(K)=M_s(g(K)).
\end{eqnarray}

\begin{lem}\label{Lscoarea} (\cite[Lemma 6.5.1]{SW08}). Let $S$ be a $k$-sphere, $0 \leq k \leq n-1$, and let $f: \mathbb{S}^n \rightarrow \mathbb{R}$ be a non-negative measurable function. Then
\begin{eqnarray}
\int_{\mathbb{S}^n} f(w) d w=\int_S \int_{\operatorname{conv}\left(S^{\circ}, v\right)} \sin \left(d_s\left(S^{\circ}, u\right)\right)^k f(u) d u d v.
\end{eqnarray}
\end{lem}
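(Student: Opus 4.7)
The plan is to establish this coarea formula via an explicit parametrization of $\mathbb{S}^n$ as a spherical join of the great $k$-sphere $S$ and its polar $(n-k-1)$-sphere $S^{\circ}$. Since $S$ and $S^{\circ}$ are orthogonal in $\mathbb{R}^{n+1}$, every point $u\in\mathbb{S}^n\setminus(S\cup S^{\circ})$ admits a unique representation $u=\cos(\alpha)\,v+\sin(\alpha)\,w$ with $v\in S$, $w\in S^{\circ}$ and $\alpha\in(0,\pi/2)$; here $v=u\mid S$, $w=u\mid S^{\circ}$, and $\alpha=d_s(v,u)=\pi/2-d_s(S^{\circ},u)$, so in particular $\sin(d_s(S^{\circ},u))^{k}=\cos(\alpha)^{k}$. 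Since $S\cup S^{\circ}$ is an $\mathcal{H}^n$-null subset of $\mathbb{S}^n$, integrals over $\mathbb{S}^n$ may be rewritten in these coordinates without loss of mass.

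The first step is to verify that the map $\Phi(v,w,\alpha)=\cos(\alpha)\,v+\sin(\alpha)\,w$ is a smooth bijection from $S\times S^{\circ}\times(0,\pi/2)$ onto $\mathbb{S}^n\setminus(S\cup S^{\circ})$, and to compute the induced volume element. The tangent vectors of $\Phi$ split into three mutually orthogonal families: an infinitesimal variation along $S$ is scaled by $\cos\alpha$, a variation along $S^{\circ}$ is scaled by $\sin\alpha$, and the $\alpha$-direction contributes the unit tangent vector $-\sin(\alpha)\,v+\cos(\alpha)\,w$, which is orthogonal to both factors. This yields
\[
du=\cos(\alpha)^{k}\sin(\alpha)^{n-k-1}\,dv\,dw\,d\alpha.
\]
The second step is geometric: for fixed $v\in S$, the set $\operatorname{conv}(S^{\circ},v)$ is precisely the $(n-k)$-dimensional hemisphere with pole $v$ and equator $S^{\circ}$, and its intrinsic $(n-k)$-dimensional spherical surface measure is the standard polar one $\sin(\alpha)^{n-k-1}\,dw\,d\alpha$ under the restriction $\Phi(v,\,\cdot\,,\,\cdot\,)$. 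Factoring the triple integral as an outer integral in $v\in S$ and an inner integral over $\operatorname{conv}(S^{\circ},v)$ leaves exactly the weight $\cos(\alpha)^{k}=\sin(d_s(S^{\circ},u))^{k}$ inside, which is the stated identity.

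The main technical point is the Jacobian calculation: one has to check the pairwise orthogonality of the three tangent families and see that no cross terms survive once orthonormal frames on $S$ and $S^{\circ}$ are chosen. This is entirely standard and mirrors the Euclidean polar decomposition with respect to an orthogonal splitting of $\mathbb{R}^{n+1}$; in the precise form needed here the identity is recorded as Lemma~6.5.1 of Schneider and Weil, and accordingly the authors invoke it by citation rather than reproving it in full.
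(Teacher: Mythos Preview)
Your sketch is correct and is essentially the standard join-decomposition argument behind this coarea formula. The paper itself does not prove the lemma; it is stated with a citation to Schneider--Weil and used as a black box, exactly as you note in your final sentence. There is nothing to compare against here beyond confirming that your parametrization, Jacobian, and identification of $\operatorname{conv}(S^{\circ},v)$ with the $(n-k)$-hemisphere are all correct, which they are.
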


\subsection{Basic facts from Hyperbolic convex geometry}

Let us recall some facts about hyperbolic geometry; see e.g. \cite{Ratcliffe94}.
In $\mathbb{R}^{n+1}$, let $$\mathbb{H}^n:=\left\{(x,x_{n+1})\in\mathbb{R}^{n+1}:\;\|x\|^2-x^2_{n+1}=-1,\;x_{n+1}>0\right\}$$
denote the upper sheet of a two-sheet hyperboloid.

\begin{definition}\label{DPoincare}(Poincar\'e model) Let $o^{\prime}:=-e_{n+1}$. For any $\bar{x}:=(x,x_{n+1})\in\mathbb{H}^n$, {\it Poincar\'e model projection point} of $\bar{x}$, denoted by $P(\bar{x})$, is the intersection of the half-line $o'\bar{x}$ and $\mathbb{R}^n$. In {\it Poincar\'e model}, $\mathbb{H}^n$ is identified with the following open unit ball equipped with a certain metric
\begin{eqnarray}\label{DBKMB}
\mathbb{B}^n:=\{(x,x_{n+1})\in\mathbb{R}^{n+1}:\;\|x\|<1,\;x_{n+1}=0\}.
\end{eqnarray} 
\end{definition}

We call the projection $P:\;\mathbb{H}^n\rightarrow \mathbb{B}^n$ as {\it Poincar\'e model projection}. In Poincar\'e model, the corresponding metric is
\begin{eqnarray}\label{Metric}
ds^2=4\frac{dx_1^2+\dots+dx_n^2}{\left(1-\left(x_1^2+\dots+x_n^2\right)\right)^2}.
\end{eqnarray}
In this metric, geodesic segments are arcs of the circles orthogonal to the boundary of the ball $\mathbb{B}^n$. If a segment passes through the origin, then
the circle becomes a straight line. We say that a body $\bar{K}\subset\mathbb{B}^n$ is {\it $h$-convex} if it is convex with respect to the metric (\ref{Metric}). The
$h$-convexity means that for any two points $x$ and $y$ in $\bar{K}$ the geodesic segment connecting these two points is also in $\bar{K}$.

For two compact sets $K,L\subset \mathbb{H}^n$, their {\it hyperbolic Hausdorff distance} is defined by
\begin{eqnarray}
d_h(K,L):=\inf\left\{r>0:\;K\subset L_{r},\;\;L\subset K_r\right\},
\end{eqnarray}
where
$$K_r:=\{\bar{y}\in\mathbb{H}^n:\;{\rm there\;exits}\;\bar{x}\in K\;{\rm such\;that}\;ds^2(P(\bar{x}),P(\bar{y}))\leq r^2\}.$$
Let $B_h(\alpha)\subset\mathbb{H}^n$ denote the {\it hyperbolic ball} centered at $e_{n+1}$ with radius $\alpha$, i.e.,
$$B_h(\alpha):=\left\{\bar{x}\in\mathbb{H}^n:\;ds^2\left(P(\bar{x}),o\right)\leq \alpha^2\right\}.$$

The volume element of the metric (\ref{Metric}) equals
\begin{eqnarray}\label{dmun}
d\mu_n=2^n\frac{dx_1\cdots dx_n}{\left(1-(x_1^2+\cdots+x_n^2)\right)^n}=2^n\frac{dx}{\left(1-\|x\|^2\right)^n}.
\end{eqnarray}
Therefore, for $\bar{K}\subset\mathbb{B}^n$, the hyperbolic volume  is then given by
\begin{eqnarray}
\operatorname{hvol}_n(\bar{K})=\int_{\bar{K}}d\mu_n=2^n\int_{\bar{K}}\frac{dx}{\left(1-\|x\|^2\right)^n}.
\end{eqnarray}
If $\bar{K}$ is a star body in $\mathbb{B}^n$, we can write its hyperbolic volume in polar coordinates,
\begin{eqnarray}\label{hvol}
\operatorname{hvol}_n(\bar{K})=2^n\int_{\mathbb{S}^{n-1}}\int_{0}^{\rho_{\bar{K}}(u)}\frac{r^{n-1}}{\left(1-r^2\right)^n}drdu,
\end{eqnarray}
where $\rho_{\bar{K}}$ denotes the radial function of $\bar{K}$ in $\mathbb{R}^n$ given in (\ref{Radialfunction}).

\begin{definition}\label{hsb} For $K\subset\mathbb{H}^n$, if $P(K)$ is a star body with respect to the origin in $\mathbb{R}^n$, then we call $K$ is a {\it hyperbolic star body} with respect to $e_{n+1}$. If $P(K)$ is a star body with respect to a ball $B_o(r)$ in $\mathbb{R}^n$, then $K$ is called as a {\it hyperbolic star body with respect to some hyperbolic ball}. \end{definition}

 We denote the class of hyperbolic star bodies  with respect to $e_{n+1}$ in $\mathbb{H}^n$ by $\mathcal{S}_o(\mathbb{H}^n)$ and denote the class of hyperbolic star bodies  with respect to a hyperbolic ball by $\mathcal{S}_B(\mathbb{H}^n)$.

\section{Spherical projection body and Spherical projection inequality}

\subsection{Spherical Steiner symmetrization}
In this section, we define the spherical Steiner symmetrization for spherical star bodies and study its some fundamental  properties.
Without loss of generality, we only consider the Steiner symmetrization along the direction $e_n$. If $K\in\mathcal{S}_B(\mathbb{S}_+^n)$, then $g(K)\in\mathcal{S}_B(\mathbb{R}^n)$. By Lemma \ref{Starbodyboundary}, there is a sequence of disjoint open subsets $G_m \subset K^{\prime}$, and two sequences of graph functions
$$
f_{m,j}, g_{m,j}: G_m \rightarrow \mathbb{R},\;1\leq j\leq m,
$$ satisfying (i), (ii) and (iii) in Lemma \ref{Starbodyboundary}. In particular,
$g(K)$ has the representation (if we neglect an $\mathcal{H}^n$-null set)
\begin{eqnarray}\label{gKfmj}
g(K)=\bigcup_{m=1}^{\infty} \bigcup_{\substack{x^{\prime} \in G_m \\ 1 \leq j \leq m}}\left\{(x',x_n):f_{m,j}(x')\leq x_n\leq g_{m,j}(x')\right\}.
\end{eqnarray}

By (\ref{DESteiner}), the Euclidean Steiner symmetrization $Sg(K)$ of $g(K)$ along the direction $e_n$ (if we neglect an $\mathcal{H}^n$-null set)
\begin{eqnarray}\label{overunderSgK}
Sg(K)=\left\{(x',x_n)\in\mathbb{R}^n:\;\;x'\in \bigcup_{m=1}^{\infty}G_m,\;\;\underline{\varrho}(x')\leq x_n\leq\overline{\varrho}(x')\right\},
\end{eqnarray}
where for $x'\in G_m$,
\begin{eqnarray}\label{gKandSgK}
\overline{\varrho}(x')=\sum_{j=1}^{m}\frac{g_{m,j}(x')-f_{m,j}(x')}{2}=-\underline{\varrho}(x').
\end{eqnarray}

The following lemma shows that the spherical measure of $Sg(K)$ is not less than the spherical measure of $g(K)$.

\begin{lem}\label{LMeasure}
For convex body $K\in\mathcal{K}_o(\mathbb{S}^n)$, we have
\begin{eqnarray}\label{ESgK}
M_s\left(Sg(K)\right)\geq M_s\left(g(K)\right),
\end{eqnarray}
with the equality if and only if $Sg(K)=g(K)$.
\end{lem}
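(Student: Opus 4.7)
The plan is to reduce the inequality to a one-dimensional rearrangement inequality applied fiberwise along $e_n$. The key observation is that the weight $\psi(x)=(1+\|x\|^2)^{-(n+1)/2}$, restricted to the vertical line through any $x'\in\mathbb{R}^{n-1}$, is an even function of the last coordinate that is strictly decreasing in $|x_n|$. This is precisely the structure for which symmetric rearrangement of the fiber (which is exactly what the Euclidean Steiner symmetrization performs, by (\ref{overunderSgK})–(\ref{gKandSgK})) maximizes the integral.

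First I would apply Lemma \ref{Starbodyboundary} to $g(K)\in\mathcal{S}_B(\mathbb{R}^n)$ and combine it with (\ref{gKfmj}) and (\ref{overunderSgK}) to obtain, by Fubini,
\begin{eqnarray*}
M_s(g(K)) &=& \int_{\bigcup_m G_m}\,\sum_{j=1}^{m}\int_{f_{m,j}(x')}^{g_{m,j}(x')}\phi_{x'}(t)\,dt\,dx',\\
M_s(Sg(K)) &=& \int_{\bigcup_m G_m}\int_{-\overline{\varrho}(x')}^{\overline{\varrho}(x')}\phi_{x'}(t)\,dt\,dx',
\end{eqnarray*}
with $\phi_{x'}(t):=(1+\|x'\|^2+t^2)^{-(n+1)/2}$ and $\overline{\varrho}(x')=\sum_{j}(g_{m,j}(x')-f_{m,j}(x'))/2$. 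So it suffices, for each $x'$, to compare the integral of $\phi_{x'}$ over the fiber $E(x'):=\bigcup_{j=1}^{m}[f_{m,j}(x'),g_{m,j}(x')]$ with its integral over the symmetric interval $[-\overline{\varrho}(x'),\overline{\varrho}(x')]$, which has the same Lebesgue measure $2\overline{\varrho}(x')$.

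Next I would establish the fiberwise inequality by the layer-cake representation $\phi_{x'}(t)=\int_0^{\phi_{x'}(0)}\chi_{\{\phi_{x'}>s\}}(t)\,ds$. Since $\phi_{x'}$ is strictly decreasing in $|t|$, each level set has the form $(-r_s,r_s)$. Writing $L:=\overline{\varrho}(x')$, one gets
\begin{equation*}
\int_{E(x')}\phi_{x'}(t)\,dt=\int_0^{\phi_{x'}(0)}\bigl|E(x')\cap(-r_s,r_s)\bigr|\,ds\leq\int_0^{\phi_{x'}(0)}\min(2L,2r_s)\,ds=\int_{-L}^{L}\phi_{x'}(t)\,dt,
\end{equation*}
using $|E(x')\cap(-r_s,r_s)|\leq\min(|E(x')|,2r_s)=\min(2L,2r_s)$. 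Strict monotonicity of $\phi_{x'}$ forces equality iff $|E(x')\cap(-r_s,r_s)|=\min(2L,2r_s)$ for almost every $s$, which is equivalent to $E(x')=[-L,L]$ modulo a null set. Integrating the fiber inequality over $x'$ yields (\ref{ESgK}).

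For the equality case I would argue that $M_s(Sg(K))=M_s(g(K))$ forces the fibers of $g(K)$ and $Sg(K)$ to agree for almost every $x'\in\bigcup_m G_m$; since $\bigcup_m G_m$ is dense in $g(K)'$ and both bodies are in $\mathcal{S}_B(\mathbb{R}^n)$ by Lemma \ref{Starbodies}, the continuity of their radial functions upgrades the almost-everywhere agreement of fibers to $Sg(K)=g(K)$. The main obstacle here is exactly this last step: the representation in Lemma \ref{Starbodyboundary} only holds up to Hausdorff-null sets, and the union $\bigcup_m G_m$ is only dense rather than equal to the projection, so one has to exploit the star-body structure to pass from a measure-theoretic identity back to a genuine equality of sets. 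Apart from that technicality, the argument is a clean application of the 1D rearrangement inequality to a weight that is decreasing in $|x_n|$.
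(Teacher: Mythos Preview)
Your proposal is correct and is essentially the paper's proof: the paper also reduces by Fubini to the fiberwise inequality and then invokes the Hardy--Littlewood rearrangement inequality (your layer-cake computation is exactly its standard proof), with the equality characterization coming from the strict unimodality of $\phi_{x'}$. The technicality you flag in the equality case is lighter than you fear here, since the hypothesis is $K\in\mathcal{K}_o(\mathbb{S}_+^n)$, so $g(K)$ is a convex body whose fibers are single intervals with continuous endpoint functions; almost-everywhere agreement of fibers then immediately upgrades to $Sg(K)=g(K)$ without needing the full star-body machinery of Lemma~\ref{Starbodyboundary}.
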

\begin{proof}
By Fubini's theorem and the definition of spherical measure (\ref{Ms}), we only need to prove that for any $x'\in g(K)|_{\mathbb{R}^{n-1}}$,
\begin{eqnarray}\label{Onlyprove}
\int_{(Sg(K))_{x'}}\left(1+\|x'\|^2+|x_n|^2\right)^{-\frac{n+1}{2}}dx_n\geq \int_{g(K)_{x'}}\left(1+\|x'\|^2+|x_n|^2\right)^{-\frac{n+1}{2}}dx_n.
\end{eqnarray}
Let
$$f_1(x_n):=\left(1+\|x'\|^2+|x_n|^2\right)^{-\frac{n+1}{2}},\;\;\;f_2(x_n):=\chi_{g(K)_{x'}}(x_n).$$
Then
$$f_1^{\star}=f_1,\;\;{\rm and}\;\;f_2^{\star}=\chi_{Sg(K)_{x'}},$$
where $f^{\star}$ denotes the symmetric decreasing rearrangement of $f$ (see (\ref{symmdecrerearr}) for specific definition).

By Hardy-Littlewood inequality, we have
\begin{eqnarray}\label{Littlewood}
&&\int_{g(K)_{x'}}\left(1+\|x'\|^2+|x_n|^2\right)^{-\frac{n+1}{2}}dx_n\\
&=&\int_{\mathbb{R}}f_1(x_n)f_2(x_n)dx_n\nonumber\\
&\leq&\int_{\mathbb{R}}f^{\star}_1(x_n)f^{\star}_2(x_n)dx_n\nonumber\\
&=&\int_{(Sg(K))_{x'}}\left(1+\|x'\|^2+|x_n|^2\right)^{-\frac{n+1}{2}}dx_n.\nonumber
\end{eqnarray}
Moreover, since $f_1$ is an even nonnegative unimodal integrable function, the equality in (\ref{Littlewood}) holds if and only if $g(K)_{x'}=Sg(K)_{x'}$. Thus, the equality in (\ref{ESgK}) holds if and only if $Sg(K)=g(K)$.
\qed
\end{proof}

By Lemma \ref{LMeasure}, $M_s\left(Sg(K)\right)\geq M_s\left(g(K)\right)$. Thus, there exists some real number $r_K\in(0,1]$ such that
\begin{eqnarray}\label{DbarS}
\bar{S}g(K):=\left\{(x',x_n)\in\mathbb{R}^n:\;\;x'\in K|_{\mathbb{R}^{n-1}},\;\;r_K\underline{\varrho}(x')\leq x_n\leq r_K\overline{\varrho}(x')\right\}
\end{eqnarray}
satisfies
\begin{eqnarray}\label{EbarS}
M_s\left(\bar{S}g(K)\right)=M_s\left(g(K)\right).
\end{eqnarray}

\begin{definition}\label{DStei}
For $K\in\mathcal{K}_o(\mathbb{S}^n)$, its {\it spherical Steiner symmetrization} $\hat{S}_{e_n}K$ along the direction $e_n\in \mathbb{S}^{n-1}$ is defined by
\begin{eqnarray}\label{DSteiner}
\hat{S}_{e_n}(K):=g^{-1}\left(\bar{S}g(K)\right),
\end{eqnarray}
where $\bar{S}g(K)$ is defined in (\ref{DbarS}).
\end{definition}

For simplicity of notation, we write $\hat{S}K$ instead of $\hat{S}_{e_n}(K)$. In Definition \ref{DStei}, $g$ and $g^{-1}$ denote the gnomonic projection and the inverse gnomonic projection (see Definition \ref{Dgnom}). If $K\in\mathcal{K}_o(\mathbb{S}^n)$, then $g(K)\in\mathcal{K}_o(\mathbb{R}^n)$. Thus $Sg(K)\in\mathcal{K}_o(\mathbb{R}^n)$. By (\ref{DbarS}), $\bar{S}g(K)\in\mathcal{K}_o(\mathbb{R}^n)$. Thus by (\ref{DSteiner}),
\begin{eqnarray}
K\in\mathcal{K}_o(\mathbb{S}_+^n)\Longrightarrow\hat{S}K\in\mathcal{K}_o(\mathbb{S}_+^n).
\end{eqnarray}
 By Lemma \ref{Starbodies}, (\ref{DbarS}), (\ref{DSteiner}) and the definition of spherical star bodies with respect to a spherical cap (see Definition \ref{ssb}),
\begin{eqnarray}
K\in\mathcal{S}_B(\mathbb{S}_+^n)\Longrightarrow\hat{S}K\in\mathcal{S}_B(\mathbb{S}_+^n).
\end{eqnarray}
By (\ref{EKandgK}) and (\ref{EbarS}), the spherical Steiner symmetrization maintains the invariance of $n$-Hausdorff measure, i.e.,
\begin{eqnarray}\label{maintainmeasure}
\mathcal{H}^n(\hat{S}K)=\mathcal{H}^n(K).
\end{eqnarray}

Similarly, for compact set $K\subset\mathbb{S}^n$, we define the {\it spherical symmetric rearrangement} $K^{\star}$ as following
\begin{eqnarray}
K^{\star}:=\left\{v\in\mathbb{S}^n:\;d_s(v,e_{n+1})\leq \alpha,\;\;\mathcal{H}^n(K)=\mathcal{H}^n(B_s(\alpha))\right\}.
\end{eqnarray}

\begin{lem}\label{Pconverge}
For $K\in\mathcal{S}_B(\mathbb{S}_+^n)$, there exists a sequence of directions $\{u_i\}_{i=1}^{\infty}\subset\mathbb{S}^{n-1}$ such that the sequence of successive spherical
Steiner symmetrizations of $K$ converges to  $K^{\star}$ in  spherical Hausdorff distance, i.e.,
\begin{eqnarray}
\lim_{i\rightarrow\infty}d_s(\hat{S}_{u_i}\cdots\hat{S}_{u_1}(K),K^{\star})=0.
\end{eqnarray}
\end{lem}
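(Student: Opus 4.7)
The plan is to adapt the Euclidean strategy of Lemma \ref{convergesstarbodies} via a monotone-functional argument using the gnomonic projection. The key functional is $F(K) := \int_{g(K)} \|x\|^2 \, dx$, the Euclidean second moment of the gnomonic image. I would first check that $F(\hat{S}_u K) \leq F(K)$ with equality precisely when $\hat{S}_u K = K$: the standard decrease of the Euclidean second moment under $S_u$ combined with the vertical scaling by $r_K \in (0,1]$ in \eqref{DbarS} yields the inequality, and the equality case of Lemma \ref{LMeasure} (which forces $r_K = 1$ and $S_u g(K) = g(K)$ simultaneously) provides the rigidity.

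Next, I would fix a countable dense subset $T = \{v_k\}_{k \in \mathbb{N}} \subset \mathbb{S}^{n-1}$ and enumerate $\{u_i\} \subset T$ so each $v_k$ appears infinitely often. Setting $K_i := \hat{S}_{u_i} \cdots \hat{S}_{u_1} K$, the sequence lies in $\mathcal{S}_B(\mathbb{S}^n_+)$ with constant $\mathcal{H}^n$-measure by \eqref{maintainmeasure}. Because $\bar{S}_u \bar{K} \subseteq S_u \bar{K}$ (by \eqref{DbarS} with $r_K \leq 1$), and Euclidean Steiner symmetrization of a body contained in a centered ball stays in that ball, the sequence $\{g(K_i)\}$ is uniformly confined to a fixed Euclidean ball. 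Using that star bodies with respect to a ball have uniformly Lipschitz radial functions, Arzelà--Ascoli extracts a subsequence along which the radial functions converge uniformly; Lemma \ref{conHaurRad} then yields $K_{i_j} \to K_\infty$ in the spherical Hausdorff distance for some $K_\infty \in \mathcal{S}_B(\mathbb{S}^n_+)$.

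To identify $K_\infty$, observe that $F(K_i)$ is monotone decreasing and bounded below by $0$, hence converges to some $F_\infty$; in particular $F(K_\infty) = F_\infty$. For each fixed $k$, extract a further subsequence of $\{i_j\}$ on which $u_{i_j + 1} = v_k$ (possible since each $v_k$ appears infinitely often). By continuity of $\hat{S}_{v_k}$ in the spherical Hausdorff distance (a prerequisite I would establish separately via the continuity of $g$, $g^{-1}$, and $\bar{S}_{v_k}$), we have $\hat{S}_{v_k} K_{i_j} \to \hat{S}_{v_k} K_\infty$, whence $F(\hat{S}_{v_k} K_\infty) = \lim F(K_{i_j + 1}) = F_\infty = F(K_\infty)$; the equality case forces $\hat{S}_{v_k} K_\infty = K_\infty$. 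Density of $T$ and continuity promote this to $\hat{S}_v K_\infty = K_\infty$ for every $v \in \mathbb{S}^{n-1}$. Since $\bar{S}_v \bar{K}$ is symmetric about $v^\perp$ by construction, $g(K_\infty)$ is symmetric about every hyperplane through $o$, forcing it to be a Euclidean ball centered at $o$; hence $K_\infty$ is a spherical cap centered at $e_{n+1}$, and preservation of $\mathcal{H}^n$-measure identifies it as $K^\star$.

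The step I expect to be the main obstacle is proving continuity of $\hat{S}_v$ in the spherical Hausdorff distance, since the rescaling factor $r_K$ in \eqref{DbarS} is defined only implicitly through the measure balance \eqref{EbarS}, and showing its continuous dependence on $\bar{K}$ requires a Fubini-plus-dominated-convergence argument combined with the strict monotonicity of spherical measure under vertical scaling. Once this continuity is in hand, the upgrade from subsequential to full convergence is standard: every subsequence of $\{K_i\}$ has a further subsequence converging to some limit satisfying $F = F_\infty$ and invariance under every $\hat{S}_v$, hence equal to $K^\star$; therefore $K_i \to K^\star$ in the spherical Hausdorff distance, as required.
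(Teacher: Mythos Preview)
Your approach differs substantially from the paper's. Rather than rebuilding a convergence argument via a monotone functional and compactness, the paper applies Lemma~\ref{convergesstarbodies} directly to $g(K)$ to obtain $\{u_i\}$ with $S_{u_i}\cdots S_{u_1}(g(K))\to g(K)^\star$ in radial distance, and then simply tracks the accumulated rescalings coming from the $\bar S$ steps: since each factor lies in $(0,1]$ and the spherical measure is preserved along the iteration, the partial products converge to some $\bar r\in(0,1]$, and one reads off $\hat S_{u_i}\cdots\hat S_{u_1}(K)\to g^{-1}(\bar r\, g(K)^\star)=K^\star$. This bypasses both of your declared obstacles: no continuity of $\hat S_v$ is needed, and no subsequence extraction appears.

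Your outline is viable but two steps need repair. The Arzel\`a--Ascoli step requires a \emph{uniform} inner ball for all $g(K_i)$; you get one by noting that $|g(K_i)|=\bigl(\prod_{j<i} r_{K_j}\bigr)|g(K)|$, while the constant value of $M_s(g(K_i))$ together with the uniform outer bound forces $|g(K_i)|$ to stay bounded below, so $\prod r_{K_j}$ is bounded away from $0$ and the inner radius does not collapse. More seriously, the claim ``extract a further subsequence of $\{i_j\}$ on which $u_{i_j+1}=v_k$'' is unjustified: that $v_k$ occurs infinitely often in the full sequence $\{u_i\}$ says nothing about its occurrence among the successors of an \emph{arbitrary} convergent subsequence $\{i_j\}$. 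The standard repairs are either a structured enumeration (blocks $v_1;\,v_1,v_2;\,v_1,v_2,v_3;\ldots$, with the subsequence taken at block ends) or a greedy choice of $u_i$ that nearly maximises the drop in $F$ at each step, so that $\sup_v\bigl[F(K_{i-1})-F(\hat S_v K_{i-1})\bigr]\to 0$ and every cluster point is automatically $\hat S_v$-fixed for all $v$. Either works, but the paper's direct route is shorter.
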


\begin{proof}
By Lemma \ref{convergesstarbodies}, there exists a sequence of directions $\{u_i\}_{i=1}^{\infty}\subset\mathbb{S}^{n-1}$ such that
\begin{eqnarray}\label{Elim}
\lim_{i\rightarrow\infty}d_R(S_{u_i}\cdots S_{u_1}(g(K)),g(K)^{\star})=0.
\end{eqnarray}
Let $r_1\in (0,1]$ satisfy
$$M_s\left(\bar{S}_{u_1,r_1}(g(K))\right)=\mathcal{H}^n(K),$$
where $\bar{S}_{u_1,r_1}(g(K))$ denotes the star body with the overgraph function on the direction $u_1$
$$\overline{\varrho}_{u_1}\left(\bar{S}_{u_1,r_1}(g(K)),\cdot\right)=r_1\overline{\varrho}_{u_1}\left(S_{u_1}(g(K)),\cdot\right)$$
and the undergraph function on the direction $u_1$
$$\underline{\varrho}_{u_1}\left(\bar{S}_{u_1,r_1}(g(K)),\cdot\right)=r_1\underline{\varrho}_{u_1}\left(S_{u_1}(g(K)),\cdot\right).$$
Let $r_1'\in (0,1]$ satisfy
$M_s\left(\bar{S}_{u_1,r_1}(g(K))\right)=M_s\left(r_1'g(K)^{\star}\right)$.
Let $r_2\in (0,1]$ satisfy
$$M_s\left(\bar{S}_{u_2,r_2}\left(g(\hat{S}_{u_1}(K))\right)\right)=M_s(\hat{S}_{u_1}(K))$$.
Let $r_2'\in (0,1]$ satisfy
$M_s\left(\bar{S}_{u_2,r_2}(g(\hat{S}_{u_1}(K)))\right)=M_s\left(r_2'r_1'g(K)^{\star}\right)$.
Repeating the previous process,  we can get a sequence of real numbers $\{r_i\}_{i=1}^{\infty}\subset(0,1]$ such that
\begin{eqnarray}
M_s\left(\bar{S}_{u_i,r_i}\left(g\left(\hat{S}_{u_{i-1}}\cdots\hat{S}_{u_1}(K)\right)\right)\right)
=M_s\left(\hat{S}_{u_{i-1}}\cdots\hat{S}_{u_1}(K)\right),\nonumber
\end{eqnarray}
and a sequence of real numbers  $\{r'_i\}_{i=1}^{\infty}\subset(0,1]$ such that
\begin{eqnarray}\label{Msleft}
M_s\left(\bar{S}_{u_i,r_i}\left(g\left(\hat{S}_{u_{i-1}}\cdots\hat{S}_{u_1}(K)\right)\right)\right)=M_s\left(r_i'\cdots r_2'r_1'g(K)^{\star}\right).
\end{eqnarray}

Since $r'_i\in(0,1]$, the product of $r'_i\cdots r'_2r'_1$ is monotonically decreasing on $i$ and  $r'_i\cdots r'_2r'_1\in(0,1]$. Let
$\bar{r}:=\lim_{i\rightarrow\infty}\left(r'_ir'_{i-1}\cdots r'_1\right)$.
Since spherical Steiner symmetrization maintains the invariance of $n$-Hausdorff measure, $\bar{r}\in (0,1]$.
Thus, by the definition of spherical Steiner symmetrizations, the continuity of inverse gnomonic projection and (\ref{Elim}), we have
\begin{eqnarray}
&&\lim_{i\rightarrow\infty}d_s\left(\hat{S}_{u_i}\cdots \hat{S}_{u_1}(K),g^{-1}\left(r'_i\cdots r'_1g(K)^{\star}\right)\right)\nonumber\\
&=&\lim_{i\rightarrow\infty}d_s\left(g^{-1}\left(\bar{S}_{u_i,r_i}\left(g\left(\hat{S}_{u_{i-1}}\cdots\hat{S}_{u_1}(K)\right)\right)\right),g^{-1}\left(r'_i\cdots,r'_1g(K)^{\star}\right)\right)\nonumber\\
&\vdots&\;\;\;\;\;\;\;\;\;\;\;\;\;\;\;\;\;\;\;\;\;\;\;\;\;\;\;\;\;\;\;\;\nonumber\\
&=&\lim_{i\rightarrow\infty}d_s\left(g^{-1}\left(\bar{S}_{u_i,r_i}\cdots \bar{S}_{u_1,r_1}(g(K))\right),g^{-1}\left(r'_i\cdots r'_1g(K)^{\star}\right)\right)\nonumber\\
&\leq&\lim_{i\rightarrow\infty}d_R\left(\bar{S}_{u_i,r_i}\cdots \bar{S}_{u_1,r_1}(g(K)),r'_i\cdots r'_1g(K)^{\star}\right)=0.\nonumber
\end{eqnarray}
By (\ref{maintainmeasure}) and (\ref{Msleft}), $K^{\star}=g^{-1}\left(\bar{r}g(K)^{\star}\right)$. This is the desired conclusion.
\end{proof}
\subsection{Spherical projection body}

In this section, using the Euclidean projection bodies, we introduce the notion of spherical projection bodies and study some elementary properties.

\begin{definition}\label{sprob} For $K\in\mathcal{S}_B(\mathbb{S}_+^n)$, its {\it spherical projection body} ${\rm \Pi}_{\mathbb{S}}(K)$ is defined by
\begin{eqnarray}\label{EDSPB}
{\rm \Pi}_{\mathbb{S}}K:=g^{-1}\left({\rm \Pi}g(K)\right).
\end{eqnarray}
\end{definition}

By the definition of spherical projection body (\ref{EDSPB}), (\ref{EhgK}) and (\ref{Dsupportfunction}), for $u\in\mathbb{S}^{n-1}$,
\begin{eqnarray}\label{Etanh}
\tan h_s\left({\rm \Pi}_{\mathbb{S}}K,u\right)=\frac{1}{2}\int_{\partial g(K)}\left|u\cdot \nu^{g(K)}(y)\right|d\mathcal{H}^{n-1}(y).
\end{eqnarray}
The following lemma shows that the spherical projection operator ${\rm \Pi}_{\mathbb{S}}:\;\mathcal{S}_B(\mathbb{S}_+^n)\rightarrow\mathcal{K}_o(\mathbb{S}^n)$ is continuous.
\begin{lem}\label{Lcontinuous}
For a sequence of spherical star bodies $\{K_i\}_{i=0}^{\infty}\subset\mathcal{S}_B(\mathbb{S}_+^n)$, if
\begin{eqnarray}\label{limki}
\lim_{i\rightarrow\infty}d_s(K_i,K_0)=0,
\end{eqnarray}
then
\begin{eqnarray}\label{limdisPi}
\lim_{i\rightarrow\infty}d_s({\rm \Pi}_{\mathbb{S}}K_i,{\rm \Pi}_{\mathbb{S}}K_0)=0.
\end{eqnarray}
\end{lem}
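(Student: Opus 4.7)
The plan is to reduce the spherical continuity statement to the Euclidean continuity result of Lemma \ref{projectionbodyconvergece} by transferring everything through the gnomonic projection. The overall scheme has four steps.

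First, I would show that $d_s(K_i,K_0)\to 0$ forces $d_H(g(K_i),g(K_0))\to 0$. All $K_i$ and $K_0$ lie in $\mathbb{S}_+^n$ and, by the $\mathcal{S}_B(\mathbb{S}_+^n)$ hypothesis, contain a common spherical cap as star-center region; the spherical Hausdorff convergence then confines them eventually to a compact subset of $\mathbb{S}_+^n$ strictly bounded away from the equator. On such a compact region $g$ is a bi-Lipschitz diffeomorphism, so the Euclidean Hausdorff convergence follows from its spherical counterpart.

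Second, and this is the main technical step, I would establish the perimeter convergence $\mathcal{H}^{n-1}(\partial g(K_i))\to\mathcal{H}^{n-1}(\partial g(K_0))$. By Lemma \ref{conHaurRad}, Hausdorff convergence for elements of $\mathcal{S}_B(\mathbb{R}^n)$ is equivalent to uniform radial convergence $\rho_{g(K_i)}\to\rho_{g(K_0)}$ on $\mathbb{S}^{n-1}$. Using the graph decomposition in Lemma \ref{Starbodyboundary}, together with the perimeter formula \eqref{performula} and the uniform lower bound $\rho_{g(K_i)}\ge\tan\alpha$ coming from the common star-ball condition, one can pass to the limit in the integral by dominated convergence, controlling the normal factor $[u\cdot\nu^{g(K_i)}(\rho_{g(K_i)}(u)u)]^{-1}$ via the gradients of the graph functions $f_{m,j},g_{m,j}$.

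Third, with Hausdorff and perimeter convergence in hand in the Euclidean setting, I directly apply Lemma \ref{projectionbodyconvergece} to conclude $d_H(\Pi g(K_i),\Pi g(K_0))\to 0$. Finally, since $\Pi g(K_0)\in\mathcal{K}_o(\mathbb{R}^n)$ is bounded, the Hausdorff-convergent sequence $\{\Pi g(K_i)\}$ stays inside a fixed compact region of $\mathbb{R}^n$, on which $g^{-1}$ is Lipschitz into $(\mathbb{S}_+^n,d_s)$. Pushing the convergence through $g^{-1}$ yields $d_s(\Pi_{\mathbb{S}}K_i,\Pi_{\mathbb{S}}K_0)=d_s(g^{-1}(\Pi g(K_i)),g^{-1}(\Pi g(K_0)))\to 0$, as required.

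The main obstacle is step two. Without convexity, the perimeter functional is not continuous under Hausdorff (or equivalently radial) convergence, so the argument must crucially exploit the $\mathcal{S}_B$ structure: the uniform positive lower bound on $\rho$ and the graph-function description from Lemma \ref{Starbodyboundary}, which together give enough uniform control on the unit normals appearing in \eqref{performula} to justify a dominated-convergence passage to the limit.
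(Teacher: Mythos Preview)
Your proposal follows essentially the same route as the paper's proof: pass to $\mathbb{R}^n$ via the gnomonic projection, use Lemma~\ref{conHaurRad} to upgrade Hausdorff to radial convergence of $g(K_i)$, invoke the perimeter formula~\eqref{performula} to supply the perimeter convergence hypothesis of Lemma~\ref{projectionbodyconvergece}, and then push the resulting Euclidean convergence back through $g^{-1}$. The paper is in fact terser than you on the perimeter step---it simply cites~\eqref{performula} and Lemma~\ref{projectionbodyconvergece} without the dominated-convergence discussion---so your identification of that passage as the main technical point is apt.
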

\begin{proof}
By the continuity of gnomonic projection and (\ref{limki}), we have
\begin{eqnarray}\label{limdHgKi}
\lim_{i\rightarrow\infty}d_H(g(K_i),g(K_0))=0.
\end{eqnarray}
Since $\{K_i\}_{i=0}^{\infty}\subset\mathcal{S}_B(\mathbb{S}_+^n)$, $g(K_i)\in\mathcal{S}_B(\mathbb{R}^n)$. By Lemma \ref{conHaurRad} and (\ref{limdHgKi}),
$g(K_i)$ converges to $g(K_0)$ in radial distance. By (\ref{performula}) and Lemma \ref{projectionbodyconvergece}, we have
$$\lim_{i\rightarrow\infty}d_H\left({\rm \Pi}(g(K_i)),{\rm \Pi}(g(K_0))\right)=0.$$
Thus by the definition of spherical projection body and the continuity of gnomonic projection, we have
\begin{eqnarray*}
\lim_{i\rightarrow\infty}d_s\left({\rm \Pi}_{\mathbb{S}}K_i,{\rm \Pi}_{\mathbb{S}}K_0\right)&=&\lim_{i\rightarrow\infty}d_s\left(g^{-1}\left({\rm \Pi}(g(K_i))\right),g^{-1}\left({\rm \Pi}(g(K_0))\right)\right)\nonumber\\
&\leq &\lim_{i\rightarrow\infty}d_H\left({\rm \Pi}(g(K_i)),{\rm \Pi}(g(K_0))\right)\nonumber\\
&=&0.
\end{eqnarray*}
This is the desired conclusion.
\end{proof}

Let $\bar{\operatorname{O}}(n+1)$ denote the set of rotation transformations around the $x_{n+1}$-axis in $\mathbb{R}^{n+1}$. The following lemma demonstrates the rotation covariance of the spherical projection operator.

\begin{lem}
Let $\phi\in \bar{\operatorname{O}}(n+1)$ be a rotation transformation on $\mathbb{R}^{n+1}$ and $K\in\mathcal{S}_B(\mathbb{S}^n)$. Then
\begin{eqnarray}
{\rm \Pi}_{\mathbb{S}}(\phi K)=\phi{\rm \Pi}_{\mathbb{S}}K.
\end{eqnarray}
\end{lem}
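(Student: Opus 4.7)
The plan is to reduce this spherical rotation covariance to the classical Euclidean rotation covariance of the projection body, by conjugating with the gnomonic projection. The key observation is that any $\phi\in\bar{\operatorname{O}}(n+1)$ (rotation around the $x_{n+1}$-axis) fixes $e_{n+1}$, preserves the inner product with $e_{n+1}$, leaves $\mathbb{S}^n_+$ invariant, and restricts on $e_{n+1}^\perp\cong\mathbb{R}^n$ to an orthogonal map $\bar\phi\in\operatorname{O}(n)$.

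The first step would be to establish the intertwining identity $g\circ\phi=\bar\phi\circ g$ on $\mathbb{S}^n_+$. This follows from a short computation using Definition \ref{Dgnom}:
$$g(\phi v)=\frac{\phi v}{e_{n+1}\cdot\phi v}-e_{n+1}=\phi\!\left(\frac{v}{e_{n+1}\cdot v}-e_{n+1}\right)=\bar\phi\,g(v),$$
where I used $\phi e_{n+1}=e_{n+1}$ and that $g(v)\in e_{n+1}^\perp$, so the restriction $\bar\phi$ acts on it. The same reasoning yields $g^{-1}\circ\bar\phi=\phi\circ g^{-1}$; in particular $g(\phi K)=\bar\phi\,g(K)$.

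The second step is to invoke the (well-known) rotation covariance of the Euclidean Petty projection body: for every $\bar\phi\in\operatorname{O}(n)$ and every $\bar K\in\mathcal{S}_B(\mathbb{R}^n)$ one has ${\rm \Pi}(\bar\phi\bar K)=\bar\phi\,{\rm \Pi}\bar K$. I would derive this quickly from the defining formula (\ref{Dsupportfunction}) by the change of variables $x=\bar\phi y$ on the boundary, together with the transformation rule $\nu^{\bar\phi\bar K}(\bar\phi y)=\bar\phi\,\nu^{\bar K}(y)$, the invariance of $\mathcal H^{n-1}$ under isometries, and the identity $|(\bar\phi w)\cdot z|=|w\cdot(\bar\phi^{-1}z)|$; these combine to give $h_{{\rm \Pi}(\bar\phi\bar K)}(z)=h_{{\rm \Pi}\bar K}(\bar\phi^{-1}z)=h_{\bar\phi\,{\rm \Pi}\bar K}(z)$.

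Combining the two steps with Definition \ref{sprob} yields
$${\rm \Pi}_{\mathbb{S}}(\phi K)=g^{-1}\!\bigl({\rm \Pi}\,g(\phi K)\bigr)=g^{-1}\!\bigl({\rm \Pi}(\bar\phi\,g(K))\bigr)=g^{-1}\!\bigl(\bar\phi\,{\rm \Pi}\,g(K)\bigr)=\phi\,g^{-1}\!\bigl({\rm \Pi}\,g(K)\bigr)=\phi\,{\rm \Pi}_{\mathbb{S}}K,$$
which is the claim. I do not anticipate any real obstacle: the only point requiring care is the identification of $\phi|_{e_{n+1}^\perp}$ with an honest element of $\operatorname{O}(n)$ and the resulting commutation with $g$; once that is in hand, the lemma is essentially the Euclidean rotation covariance of ${\rm \Pi}$ pulled back through the gnomonic projection.
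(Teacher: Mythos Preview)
Your proposal is correct and follows essentially the same route as the paper: both reduce the claim to Euclidean rotation covariance of $\Pi$ via the intertwining relation $g(\phi K)=\bar\phi\,g(K)$ for the restriction $\bar\phi\in\operatorname{O}(n)$, then chain the identical five equalities. The only difference is that you supply self-contained verifications of the intertwining identity and of ${\rm \Pi}(\bar\phi\,\bar K)=\bar\phi\,{\rm \Pi}\bar K$, whereas the paper merely asserts the former and cites \cite[Lemma~6.4]{LX22} for the latter.
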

\begin{proof}
For $\phi\in \bar{\operatorname{O}}(n+1)$, there exists a rotation transformation $\bar{\phi}\in\operatorname{O}(n)$ on $\mathbb{R}^n$ such that
\begin{eqnarray}\label{rotation}
g(\phi K)=\bar{\phi}\left(g(K)\right).
\end{eqnarray}
By \cite[Lemma 6.4]{LX22}, we have
\begin{eqnarray}\label{EPaffine}
{\rm \Pi}\left(\bar{\phi}\left(g(K)\right)\right)=\bar{\phi}{\rm \Pi}\left(g(K)\right).
\end{eqnarray}
Therefore,
\begin{eqnarray*}
{\rm \Pi}_{\mathbb{S}}(\phi K)=g^{-1}\left({\rm \Pi}(g(\phi K))\right)=g^{-1}\left({\rm \Pi}(\bar{\phi}g( K))\right)=g^{-1}\left(\bar{\phi}{\rm \Pi}(g( K))\right)=\phi g^{-1}\left({\rm \Pi}(g( K))\right)=\phi{\rm \Pi}_{\mathbb{S}}K,
\end{eqnarray*}
where first equality is due to (\ref{EDSPB}), the second is due to (\ref{rotation}), the third is due to (\ref{EPaffine}), the fourth is due to (\ref{rotation}) and the last equality is due to the definition of spherical projection body (\ref{EDSPB}).
\end{proof}

\subsection{Spherical projection inequality}

\begin{lem}\label{Ldecreasing}
Let $K\in\mathcal{S}_B(\mathbb{S}_+^n)$. Then
\begin{eqnarray}\label{EPiast}
\mathcal{H}^n\left({\rm \Pi}_{\mathbb{S}}^{\circ}\left(\hat{S}K\right)\right)\geq \mathcal{H}^n\left({\rm \Pi}_{\mathbb{S}}^{\circ}K\right),
\end{eqnarray}
with equality if and only if $\hat{S}K=K$.
\end{lem}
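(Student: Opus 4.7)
The plan is to transfer the problem to the Euclidean setting via the gnomonic projection $g$ and then adapt the Lutwak--Yang--Zhang Steiner symmetrization strategy for the Euclidean Petty projection inequality, using Lemma \ref{Lprojectionbody} together with the Hardy--Littlewood rearrangement mechanism underlying Lemma \ref{LMeasure}.

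First, I would rewrite both sides of the inequality in terms of the spherical measure $M_s$ on the Euclidean polar body. From (\ref{EgKast}) and the central symmetry of $\Pi g(K)$ one obtains $\mathcal{H}^n(\Pi_{\mathbb{S}}^{\circ}K) = M_s((\Pi g(K))^{\ast})$ (via (\ref{EKandgK})), and similarly $\mathcal{H}^n(\Pi_{\mathbb{S}}^{\circ}\hat{S}K) = M_s((\Pi \bar{S}g(K))^{\ast})$, so the target becomes $M_s((\Pi g(K))^{\ast})\leq M_s((\Pi \bar{S}g(K))^{\ast})$. The key claim is then the inclusion $S_{e_n}((\Pi g(K))^{\ast})\subset (\Pi \bar{S}g(K))^{\ast}$. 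Given this inclusion, since $\bar{S}g(K)$ is invariant under $x_n\mapsto -x_n$ the target body $(\Pi\bar{S}g(K))^{\ast}$ is also invariant under this reflection; applying the Hardy--Littlewood argument that drives Lemma \ref{LMeasure}, now to the radially decreasing weight $(1+\|x\|^{2})^{-(n+1)/2}$ on each fiber, yields $M_s((\Pi g(K))^{\ast})\leq M_s(S_{e_n}((\Pi g(K))^{\ast}))\leq M_s((\Pi \bar{S}g(K))^{\ast})$.

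To verify the inclusion I would apply Lemma \ref{Lprojectionbody} with $L=\Pi g(K)$ and the second argument equal to $\Pi\bar{S}g(K)$, reducing it to the support-function implication ``$h_{\Pi g(K)}(z',t)=h_{\Pi g(K)}(z',-s)=1$ with $t\neq -s$ implies $h_{\Pi\bar{S}g(K)}(z',(s+t)/2)\leq 1$''. Using Lemma \ref{Starbodyboundary} to decompose $\partial g(K)$ into upper/lower graphs $g_{m,j},f_{m,j}$ over disjoint open sets $G_m$ and setting $a_{m,j}(x')=\nabla g_{m,j}(x')\cdot z'$, $b_{m,j}(x')=\nabla f_{m,j}(x')\cdot z'$, the definition (\ref{Dsupportfunction}) yields a closed formula for $h_{\Pi g(K)}(z',\cdot)$ in terms of $|t-a_{m,j}|+|b_{m,j}-t|$; for $\bar{S}g(K)$, whose top and bottom graphs are $\pm r_K\bar{\varrho}$ with $\bar{\varrho}=\sum_j(g_{m,j}-f_{m,j})/2$, the identity $|u-v|+|u+v|=2\max(|u|,|v|)$ gives the corresponding formula $h_{\Pi\bar{S}g(K)}(z',w)=\int_{\cup G_m}\max(|w|,\,r_K|\nabla\bar{\varrho}(x')\cdot z'|)\,dx'$. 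The target implication then follows from a pointwise bound on each $G_m$ obtained by combining the triangle inequalities $|t-a_{m,j}|+|a_{m,j}+s|\geq|s+t|$, $|b_{m,j}-t|+|b_{m,j}+s|\geq|s+t|$, and $|a_{m,j}-b_{m,j}|\leq|t-a_{m,j}|+|b_{m,j}-t|$ (and the analogue with $s$ in place of $t$), together with the crucial fact $r_K\leq 1$; integrating and using the hypothesis $h_{\Pi g(K)}(z',t)+h_{\Pi g(K)}(z',-s)=2$ produces the bound $h_{\Pi\bar{S}g(K)}(z',(s+t)/2)\leq 1$.

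For equality, I would trace back through these steps: sharpness of the Hardy--Littlewood inequality forces $(\Pi g(K))^{\ast}$ to be $e_n$-symmetric, while equality in the triangle-inequality/pointwise bound together with $r_K=1$ forces $g(K)=Sg(K)=\bar{S}g(K)$, whence $\hat{S}K=K$. The main obstacle will be cleanly bookkeeping the many-graph decomposition of $\partial g(K)$ supplied by Lemma \ref{Starbodyboundary} (which is needed because $g(K)$ is only a star body with respect to a ball, not convex); once that structure is set up, the core of the argument is the short LYZ-style triangle-inequality estimate described above.
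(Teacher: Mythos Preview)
Your proposal is correct and follows essentially the same route as the paper: transfer to $\mathbb{R}^n$ via $g$, use Lemma~\ref{Starbodyboundary} to decompose $\partial g(K)$ into graphs, verify the Lutwak--Yang--Zhang support-function implication of Lemma~\ref{Lprojectionbody} by triangle inequalities together with $r_K\le 1$ to obtain $S\Pi^{\ast}(g(K))\subset\Pi^{\ast}(\bar S g(K))$, and then apply the Hardy--Littlewood step of Lemma~\ref{LMeasure} to pass to $M_s$. The only cosmetic difference is that you rewrite $h_{\Pi\bar S g(K)}(z',w)$ via the identity $|u-v|+|u+v|=2\max(|u|,|v|)$, whereas the paper keeps it as a sum and invokes the monotonicity of $|a+bt|+|a-bt|$ in $t>0$ to remove $r_K$; the two formulations are equivalent, and the equality discussion (forcing $r_K=1$ and hence $Sg(K)=g(K)$) is the same in both.
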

\begin{proof}
If $K\in\mathcal{S}_B(\mathbb{S}_+^n)$, then $g(K)\in\mathcal{S}_B(\mathbb{R}^n)$. For $g(K)$, by Lemma \ref{Starbodyboundary}, there is a sequence of disjoint open subsets $G_m \subset g(K)^{\prime}$, and two sequences of graph functions
$$
f_{m,j}, g_{m,j}: G_m \rightarrow \mathbb{R},\;1\leq j\leq m,
$$ satisfying (i), (ii) and (iii) in Lemma \ref{Starbodyboundary}. In particular, $\partial g(K)$ has the representation (if we neglect an $\mathcal{H}^{n-1}$-null set)
$$
\partial g(K)=\bigcup_{m=1}^{\infty} \bigcup_{j=1}^m\left\{\left(x^{\prime}, f_{m,j}\left(x^{\prime}\right)\right): x^{\prime} \in G_m\right\} \cup\left\{\left(x^{\prime}, g_{m,j}\left(x^{\prime}\right)\right): x^{\prime} \in G_m\right\} .
$$

Let $(z',t),(z',-s)\in\partial{\rm \Pi}^{\ast}\left(g(K)\right)$ and $t\neq-s$, i.e.,
\begin{eqnarray}\label{Eequ1}
h\left({\rm \Pi}\left(g(K)\right),z',t\right)=1=h\left({\rm \Pi}\left(g(K)\right),z',-s\right).
\end{eqnarray}
By the definition of projection body (\ref{Eequ1}), (\ref{Dsupportfunction}) and  (iii) in Lemma \ref{Starbodyboundary}, we have
\begin{eqnarray}\label{E1}
1&=&h\left({\rm \Pi}\left(g(K)\right),z',t\right)\\
&=&\int_{\partial g(K)}\left|(z',t)\cdot\nu^{g(K)}(x)\right|d\mathcal{H}^{n-1}(x)\nonumber\\
&=&\sum_{m=1}^{\infty}\sum_{j=1}^m\int_{\left\{(x',f_{m,j}(x')):\;x'\in G_m\right\}}\frac{\left|(z',t)\cdot\left(\nabla f_{m,j}(x'),-1\right)\right|}{\sqrt{1+\left|\nabla f_{m,j}(x')\right|^2}}d\mathcal{H}^{n-1}(x)\nonumber\\
&&+\sum_{m=1}^{\infty}\sum_{j=1}^m\int_{\left\{(x',g_{m,j}(x')):\;x'\in G_m\right\}}\frac{\left|(z',t)\cdot\left(-\nabla g_{m,j}(x'),1\right)\right|}{\sqrt{1+\left|\nabla g_{m,j}(x')\right|^2}}d\mathcal{H}^{n-1}(x)\nonumber\\
&=&\sum_{m=1}^{\infty}\sum_{j=1}^m\int_{G_m}\left|(z',t)\cdot\left(\nabla f_{m,j}(x'),-1\right)\right|dx'+\sum_{m=1}^{\infty}\sum_{j=1}^m\int_{G_m}\left|(z',t)\cdot\left(-\nabla g_{m,j}(x'),1\right)\right|dx'.\nonumber
\end{eqnarray}
Similarly, we have
\begin{eqnarray}\label{E11}
1&=&h\left({\rm \Pi}\left(g(K)\right),z',-s\right)\\
&=&\sum_{m=1}^{\infty}\sum_{j=1}^m\int_{G_m}\left|(z',-s)\cdot\left(\nabla f_{m,j}(x'),-1\right)\right|dx'+\sum_{m=1}^{\infty}\sum_{j=1}^m\int_{G_m}\left|(z',-s)\cdot\left(-\nabla g_{m,j}(x'),1\right)\right|dx'.\nonumber
\end{eqnarray}

Therefore, we have
\begin{eqnarray}\label{Ihleq1}
&&h\left({\rm \Pi}\left(\bar{S}g(K)\right),z',\frac{t+s}{2}\right)\\
&=&\int_{K'}\left|\left(z',\frac{t+s}{2}\right)\cdot\left(-r_K\nabla\overline{\varrho}(x'),1\right)\right|dx'+\int_{K'}\left|\left(z',\frac{t+s}{2}\right)\cdot\left(r_K\nabla\underline{\varrho}(x'),-1\right)\right|dx'\nonumber\\
&\leq&\int_{K'}\left|\left(z',\frac{t+s}{2}\right)\cdot\left(-\nabla\overline{\varrho}(x'),1\right)\right|dx'+\int_{K'}\left|\left(z',\frac{t+s}{2}\right)\cdot\left(\nabla\underline{\varrho}(x'),-1\right)\right|dx'\nonumber\\
&=&\int_{K'}\left|\left(z',\frac{t+s}{2}\right)\cdot\left(-\sum_{j=1}^{m}\frac{\nabla g_{m,j}(x')-\nabla f_{m,j}(x')}{2},1\right)\right|dx'\nonumber\\
&&+\int_{K'}\left|\left(z',\frac{t+s}{2}\right)\cdot\left(-\sum_{j=1}^{m}\frac{\nabla g_{m,j}(x')-\nabla f_{m,j}(x')}{2},-1\right)\right|dx'\nonumber\\
&\leq&\frac{1}{2}\sum_{m=1}^{\infty}\sum_{j=1}^{m}\int_{G_m}\left|\left(z',t\right)\cdot\left(-\nabla g_{m,j}(x'),1\right)\right|dx'+\frac{1}{2}\sum_{m=1}^{\infty}\sum_{j=1}^{m}\int_{G_m}\left|\left(z',-s\right)\cdot\left(\nabla f_{m,j}(x'),-1\right)\right|dx'\nonumber\\
&&+\frac{1}{2}\sum_{m=1}^{\infty}\sum_{j=1}^{m}\int_{G_m}\left|\left(z',t\right)\cdot\left(\nabla f_{m,j}(x'),-1\right)\right|dx'+\frac{1}{2}\sum_{m=1}^{\infty}\sum_{j=1}^{m}\int_{G_m}\left|\left(z',-s\right)\cdot\left(-\nabla g_{m,j}(x'),1\right)\right|dx'\nonumber\\
&=&\frac{1}{2}h\left({\rm \Pi}(g(K)),z',t\right)+\frac{1}{2}h\left({\rm \Pi}(g(K)),z',-s\right)=1,\nonumber
\end{eqnarray}
where the first equality is due to (\ref{DbarS}) and the same reasoning process as (\ref{E1}), the first inequality is due to $r_K\leq1$ and the monotonically increasing property of $|a+bt|+|a-bt|$ on $t>0$ for any $a,b\in\mathbb{R}$, the second equality is due to (\ref{gKandSgK}), the second inequality is due to the triangle inequalities for absolute value functions, the last two equalities are due to (\ref{E1}) and (\ref{E11}).

By (\ref{Ihleq1}) and Lemma \ref{Lprojectionbody},
\begin{eqnarray}
S{\rm \Pi}^{\ast}\left(g(K)\right)\subseteq{\rm \Pi}^{\ast}\left(\bar{S}g(K)\right).
\end{eqnarray}
By Lemma \ref{LMeasure} and the above containment relationship, we have
$$M_s\left({\rm \Pi}^{\ast}\left(g(K)\right)\right)\leq M_s\left(S{\rm \Pi}^{\ast}\left(g(K)\right)\right)\leq M_s\left({\rm \Pi}^{\ast}\left(\bar{S}g(K)\right)\right).$$
By the above inequality, the definition of spherical projection body (\ref{EDSPB}), the definition of spherical Steiner symmetrization (\ref{DSteiner}) and (\ref{EgKast}), we have
$$\mathcal{H}^n\left({\rm \Pi}_{\mathbb{S}}^{\circ}(K)\right)\leq\mathcal{H}^n\left({\rm \Pi}_{\mathbb{S}}^{\circ}(\hat{S}K)\right).$$

If $\mathcal{H}^n\left({\rm \Pi}_{\mathbb{S}}^{\circ}(K)\right)=\mathcal{H}^n\left({\rm \Pi}_{\mathbb{S}}^{\circ}(\hat{S}K)\right)$, then the equality in the first inequality of (\ref{Ihleq1}) is established.
 Thus by the arbitrariness of $z'$, $r_K=1$. By Lemma \ref{LMeasure}, $Sg(K)=g(K)$. Therefore, $\hat{S}K=K$.
\end{proof}

{\noindent}{\bf Proof of Theorem \ref{TSPB}.} By Lemma \ref{Pconverge}, there exists a sequence of directions $\{u_i\}_{i=1}^{\infty}$ such that
$\hat{S}_{u_i}\cdots\hat{S}_{u_1}K$ converges to  $K^{\star}$ in spherical Hausdorff distance.
By the continuity of spherical projection operator (see Lemma \ref{Lcontinuous}), we have
\begin{eqnarray}\label{Elimds}
\lim_{i\rightarrow\infty}d_s\left({\rm \Pi}_{\mathbb{S}}\left(\hat{S}_{u_i}\cdots\hat{S}_{u_1}K\right),{\rm \Pi}_{\mathbb{S}}(K^\star)\right)=0.
\end{eqnarray}
By Lemma \ref{Ldecreasing}, $\mathcal{H}^n\left({\rm \Pi}^{\circ}_s\left(\hat{S}_{u_i}\cdots\hat{S}_{u_1}K\right)\right)$ is increasing with respect to $i$. Thus, by (\ref{Elimds}), we have
\begin{eqnarray}\label{Egeq}
\mathcal{H}^n\left({\rm \Pi}^{\circ}_s(K)\right)\leq\mathcal{H}^n\left({\rm \Pi}^{\circ}_s(K^{\star})\right).
\end{eqnarray}

If $K\neq K^\star$, then there exists some $u_o\in\mathbb{S}^{n-1}$ such that $\hat{S}_{u_o}(K)\neq K$.
By Lemma \ref{Ldecreasing}, we have
\begin{eqnarray}\label{Ebigger}
\mathcal{H}^n\left({\rm \Pi}_{\mathbb{S}}^{\circ}(K)\right)<\mathcal{H}^n\left({\rm \Pi}_{\mathbb{S}}^{\circ}\left(\hat{S}_{u_o}(K)\right)\right).
\end{eqnarray}
By (\ref{Egeq}) and (\ref{Ebigger}), we have
$$\mathcal{H}^n\left({\rm \Pi}_{\mathbb{S}}^{\circ}(K)\right)<\mathcal{H}^n\left({\rm \Pi}^{\circ}_s(K^\star)\right).$$
Therefore, the equality in (\ref{Egeq}) holds if and only if $K=K^\star$. \qed

\subsection{Spherical projection inequality and an inequality on perimeters}
In this section, we shall prove that spherical projection inequality is stronger than an inequality on perimeters.

Let $$F_1(t):=\frac{{\pi}}{2}-\arctan t,\;\;\;t\in(0,\infty)$$
and
$$F_2(s):=\int_{0}^{s}(\sin r)^{n-1}dr,\;\;\;s\in(0,\frac{{\pi}}{2}).$$
Let $F:=F_2\circ F_1$ be the composition function of $F_1$ and $F_2$.
 It is easily to check that $F_1$ and $F_2$ are strictly convex functions, $F_2$ is strictly increasing and $F_1$ is strictly decreasing. Thus  $F$ is strictly convex and strictly decreasing.
 Let $F^{-1}$ be the inverse function of $F$, then $F^{-1}$ is also strictly convex and strictly decreasing.

\begin{thm} Let $K\in\mathcal{S}_B(\mathbb{S}_+^n)$ and $c_o=\omega_{n-1}/(n\omega_n)$. Then
\begin{align}\label{ionp}c_o\mathcal{H}^{n-1}\left(\partial g(K^\star)\right)&=F^{-1}\left(\frac{1}{n\omega_n}\mathcal{H}^n\left({\rm \Pi}^{\circ}_{\mathbb{S}}(K^\star)\right)\right)\\&\leq F^{-1}\left(\frac{1}{n\omega_n}\mathcal{H}^n\left({\rm \Pi}^{\circ}_{\mathbb{S}}(K)\right)\right)\leq c_o\mathcal{H}^{n-1}\left(\partial g(K)\right) .\nonumber\end{align}
Moreover, $\mathcal{H}^{n-1}\left(\partial g(K^\star)\right)=\mathcal{H}^{n-1}\left(\partial g(K)\right)$ if and only if $K^\star=K$.
\end{thm}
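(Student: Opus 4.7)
The plan is to express $\mathcal H^n({\rm \Pi}^{\circ}_{\mathbb S}(K))$ as an integral of the composition $F=F_2\circ F_1$ evaluated at the Euclidean Petty support function, and then apply Jensen's inequality together with Theorem \ref{TSPB}.

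First I would compute the radial function of the spherical polar projection body. Combining \eqref{Rhandrho} with \eqref{Etanh} yields, for every $u\in\mathbb S^{n-1}$,
$$\rho_s\bigl(-{\rm \Pi}^{\circ}_{\mathbb S}(K),-u\bigr)=\tfrac{\pi}{2}-\arctan h({\rm \Pi}g(K),u)=F_1(\psi_K(u)),$$
where I set $\psi_K(u):=\frac{1}{2}\int_{\partial g(K)}|u\cdot\nu^{g(K)}(y)|\,d\mathcal H^{n-1}(y)$. Since $-{\rm \Pi}^{\circ}_{\mathbb S}(K)$ is a spherical star body with respect to $e_{n+1}$, writing its volume in spherical polar coordinates (parametrizing each radius by $\cos\theta\,e_{n+1}+\sin\theta\,v$, $\theta\in[0,\rho_s]$, $v\in\mathbb S^{n-1}$) and using the substitution $u\mapsto -u$ gives
\begin{equation*}
\mathcal H^n\bigl({\rm \Pi}^{\circ}_{\mathbb S}(K)\bigr)=\int_{\mathbb S^{n-1}}\!\!\int_0^{F_1(\psi_K(u))}\sin^{n-1}\theta\,d\theta\,du=\int_{\mathbb S^{n-1}}F(\psi_K(u))\,du.
\end{equation*}

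Next, since $F$ is strictly convex, Jensen's inequality yields
$$\frac{1}{n\omega_n}\int_{\mathbb S^{n-1}}F(\psi_K(u))\,du\ge F\!\left(\frac{1}{n\omega_n}\int_{\mathbb S^{n-1}}\psi_K(u)\,du\right).$$
By Fubini's theorem and the standard identity $\int_{\mathbb S^{n-1}}|u\cdot w|\,du=2\omega_{n-1}|w|$,
$$\int_{\mathbb S^{n-1}}\psi_K(u)\,du=\tfrac{1}{2}\int_{\partial g(K)}\!\!\int_{\mathbb S^{n-1}}|u\cdot\nu^{g(K)}(y)|\,du\,d\mathcal H^{n-1}(y)=\omega_{n-1}\mathcal H^{n-1}(\partial g(K)),$$
so the average on the right is exactly $c_o\mathcal H^{n-1}(\partial g(K))$. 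Applying $F^{-1}$, which is strictly decreasing, flips the inequality and produces the right-hand inequality of \eqref{ionp}. For the leftmost equality in \eqref{ionp}, I note that $g(K^\star)$ is a Euclidean ball $B_o(R)$, so by symmetry $\psi_{K^\star}(u)=\omega_{n-1}R^{n-1}$ is constant in $u$; hence Jensen's inequality is an equality and both sides reduce to $\omega_{n-1}R^{n-1}=c_o\mathcal H^{n-1}(\partial g(K^\star))$. The middle inequality of \eqref{ionp} is immediate from Theorem \ref{TSPB} together with the monotonicity of $F^{-1}$.

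For the equality characterization, assume $\mathcal H^{n-1}(\partial g(K^\star))=\mathcal H^{n-1}(\partial g(K))$; then the full chain \eqref{ionp} collapses to equalities, in particular the middle one, and Theorem \ref{TSPB} forces $K=K^\star$. The converse is trivial. The main technical obstacle I anticipate is step one: carefully identifying $-{\rm \Pi}^{\circ}_{\mathbb S}(K)$ via \eqref{Rhandrho} and \eqref{EgKast}, handling the sign flip $u\mapsto-u$, and justifying the spherical polar-coordinate volume formula for an arbitrary $K\in\mathcal S_B(\mathbb S^n_+)$ (so that $-{\rm \Pi}^{\circ}_{\mathbb S}(K)$ is indeed a spherical star body with respect to $e_{n+1}$); once the representation $\mathcal H^n({\rm \Pi}^\circ_{\mathbb S}(K))=\int_{\mathbb S^{n-1}}F(\psi_K)du$ is in hand, the remaining steps are routine convexity and Fubini arguments.
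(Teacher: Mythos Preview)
Your proposal is correct and follows essentially the same route as the paper: derive the representation $\mathcal H^n({\rm \Pi}^{\circ}_{\mathbb S}(K))=\int_{\mathbb S^{n-1}}F(\psi_K(u))\,du$ via the spherical polar volume formula (the paper invokes Lemma~\ref{Lscoarea} for this), then combine Jensen's inequality, Fubini, and Theorem~\ref{TSPB}, with the equality case read off from the equality case of Theorem~\ref{TSPB}. The only cosmetic difference is that you apply Jensen to the convex function $F$ and then flip via the monotonicity of $F^{-1}$, whereas the paper applies Jensen directly to the convex function $F^{-1}$; both are valid and yield the same inequality.
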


\begin{proof}

For $K\in\mathcal{S}_B(\mathbb{S}_+^n)$, by Lemma \ref{Lscoarea}, (\ref{Rhandrho}) and (\ref{Etanh}),
\begin{eqnarray}\label{Vlomeofpolar}
\mathcal{H}^n\left({\rm \Pi}^{\circ}_{\mathbb{S}}(K)\right)&=&\mathcal{H}^n\left(-{\rm \Pi}^{\circ}_{\mathbb{S}}(K)\right)\nonumber\\
&=&\int_{\mathbb{S}^{n-1}}\int_{\operatorname{conv}(e_{n+1}, v)}\sin\left(d_s\left(e_{n+1},u\right)\right)^{n-1}\chi_{-{\rm \Pi}^{\circ}_{\mathbb{S}}(K)}(u)dudv\\
&=&\int_{\mathbb{S}^{n-1}}\int_{0}^{\rho_s(-{\rm \Pi}^{\circ}_{\mathbb{S}}(K),v)}(\sin t)^{n-1}dtdv\nonumber\\
&=&\int_{\mathbb{S}^{n-1}}\int_{0}^{\frac{{\pi}}{2}-h_s({\rm \Pi}_{\mathbb{S}}(K),v)}(\sin t)^{n-1}dtdv\nonumber\\
&=&\int_{\mathbb{S}^{n-1}}\int_{0}^{\frac{{\pi}}{2}-\arctan\left(\frac{1}{2}\int_{\partial g(K)}\left|v\cdot \nu^{g(K)}(y)\right|d\mathcal{H}^{n-1}(y)\right)}(\sin t)^{n-1}dtdv.\nonumber
\end{eqnarray}
 By (\ref{Vlomeofpolar}), we have
\begin{eqnarray}\label{sigma}
\mathcal{H}^n\left({\rm \Pi}^{\circ}_{\mathbb{S}}(K)\right)=\int_{\mathbb{S}^{n-1}}F\left(\frac{1}{2}\int_{\partial g(K)}\left|v\cdot \nu^{g(K)}(y)\right|d\mathcal{H}^{n-1}(y)\right)dv.
\end{eqnarray}

 By Theorem \ref{TSPB},
 \begin{eqnarray}\label{F1}
 F^{-1}\left(\frac{1}{n\omega_n}\mathcal{H}^n\left({\rm \Pi}_{\mathbb{S}}^{\circ}(K^\star)\right)\right)\leq F^{-1}\left(\frac{1}{n\omega_n}\mathcal{H}^n\left({\rm \Pi}^{\circ}_{\mathbb{S}}(K)\right)\right).
 \end{eqnarray}
By Jensen's inequality, (\ref{sigma}) and Fubini's theorem, we have
\begin{eqnarray}\label{F2}
F^{-1}\left(\frac{1}{n\omega_n}\mathcal{H}^n\left({\rm \Pi}^{\circ}_{\mathbb{S}}(K)\right)\right)&\leq& \frac{1}{n\omega_n}\int_{\mathbb{S}^{n-1}}\left(\frac{1}{2}\int_{\partial g(K)}\left|v\cdot \nu^{g(K)}(y)\right|d\mathcal{H}^{n-1}(y)\right)dv\nonumber\\
&=&\frac{1}{2n\omega_n}\int_{\partial g(K)}\left(\int_{\mathbb{S}^{n-1}}\left|v\cdot \nu^{g(K)}(y)\right|dv\right)d\mathcal{H}^{n-1}(y)\nonumber\\
&=&\frac{\omega_{n-1}}{n\omega_n}\mathcal{H}^{n-1}\left(\partial g(K)\right).
\end{eqnarray}
Similarly, by the equality case of Jensen's inequality, (\ref{sigma}) and Fubini's theorem, we have
\begin{eqnarray}\label{F3}
F^{-1}\left(\frac{1}{n\omega_n}\mathcal{H}^n\left({\rm \Pi}^{\circ}_{\mathbb{S}}(K^\star)\right)\right)&=& \frac{1}{n\omega_n}\int_{\mathbb{S}^{n-1}}\left(\frac{1}{2}\int_{\partial g(K^\star)}\left|v\cdot \nu^{g(K^\star)}(y)\right|d\mathcal{H}^{n-1}(y)\right)dv\nonumber\\
&=&\frac{1}{2n\omega_n}\int_{\partial g(K^\star)}\left(\int_{\mathbb{S}^{n-1}}\left|v\cdot \nu^{g(K^\star)}(y)\right|dv\right)d\mathcal{H}^{n-1}(y)\nonumber\\
&=&\frac{\omega_{n-1}}{n\omega_n}\mathcal{H}^{n-1}\left(\partial g(K^\star)\right).
\end{eqnarray}
Note that the first equality of (\ref{F3}) is due to the fact that the following integral is a constant for any $v\in\mathbb{S}^{n-1}$,
$$\int_{\partial g(K^\star)}\left|v\cdot \nu^{g(K^\star)}(y)\right|d\mathcal{H}^{n-1}(y).$$
Then, the desired inequality follows from  (\ref{F1}), (\ref{F2}) and (\ref{F3}).

If $\mathcal{H}^{n-1}\left(\partial g(K^\star)\right)=\mathcal{H}^{n-1}\left(\partial g(K)\right)$, then by
strict monotonicity of $F^{-1}$ and (\ref{ionp}), we have
$\mathcal{H}^n\left({\rm \Pi}^{\circ}_{\mathbb{S}}(K^\star)\right)=\mathcal{H}^n\left({\rm \Pi}^{\circ}_{\mathbb{S}}(K)\right).$ Thus, the equality case of Theorem \ref{TSPB} gives $K^\star=K.$
\end{proof}

\section{Hyperbolic projection body and Hyperbolic projection inequality}

\subsection{$\Phi_p$ transformation of Hyperbolic space}
First, we give a transformation from Poincar\'e model $\mathbb{B}^n$ onto $\mathbb{R}^n$. Let $\Phi:\;\mathbb{B}^n:\rightarrow\mathbb{R}^n$ be a transformation given by
\begin{eqnarray}\label{y}
y=\Phi(x):=\tan\left(2\arctan\|x\|\right)\frac{x}{\|x\|}=\frac{2x}{1-\|x\|^2}.
\end{eqnarray}
Then its inverse transformation $\Phi^{-1}:\;\mathbb{R}^n\rightarrow\mathbb{B}^n$ is
\begin{eqnarray}\label{x}
x=\Phi^{-1}(y)=\frac{y}{1+\sqrt{1+\|y\|^2}}.
\end{eqnarray}
Let $\Phi_p:=\Phi\circ P$ denote the composite of $\Phi$ transformation defined in (\ref{y}) and the Poincar\'e model projection defined in Definition \ref{DPoincare}.

By (\ref{x}), we have
\begin{eqnarray}\label{dxi}
dx_i=\frac{dy_i}{1+\sqrt{1+\|y\|^2}}-\frac{y_i(y\cdot dy)}{\left(1+\sqrt{1+\|y\|^2}\right)^2\sqrt{1+\|y\|^2}}.
\end{eqnarray}
Combining (\ref{Metric}), (\ref{x}) and (\ref{dxi}), let $|dx|^2:=dx_1^2+\cdots+dx_n^2$, the metric in Poincar\'e model
\begin{eqnarray}\label{ds2y}
ds^2=4\frac{|dx|^2}{\left(1-\|x\|^2\right)^2}
&=&\sum_{i=1}^{n}\left(dy_i-\frac{y_i\left(y\cdot dy\right)}{\left(1+\sqrt{1+\|y\|^2}\right)\sqrt{1+\|y\|^2}}\right)^2\nonumber\\
&=&|dy|^2-\frac{2\sqrt{1+\|y\|^2}+2+\|y\|^2}{\left(1+\sqrt{1+\|y\|^2}\right)^2(1+\|y\|^2)}\left(y\cdot dy\right)^2\nonumber\\
&=&|dy|^2-\frac{\left(y\cdot dy\right)^2}{1+\|y\|^2}.
\end{eqnarray}
By the above equality, we have
\begin{eqnarray}\label{ds2dy2}
|dy|^2\geq ds^2\geq|dy|^2-\frac{\|y\|^2|dy|^2}{1+\|y\|^2}=\frac{|dy|^2}{1+\|y\|^2}.
\end{eqnarray}

By (\ref{x}) and  the area formula (see \cite[Theorem 3.8]{EG15}), we have the volume element
\begin{eqnarray}\label{dx}
dx:=\frac{1}{\sqrt{1+\|y\|^2}\left(1+\sqrt{1+\|y\|^2}\right)^n}dy.
\end{eqnarray}
Combining (\ref{dmun}), (\ref{x}) and (\ref{dx}) we obtain
\begin{eqnarray}\label{dmuny}
d\mu_n=\frac{1}{\sqrt{1+\|y\|^2}}dy.
\end{eqnarray}
Therefore, for $K\in\mathcal{S}_o(\mathbb{H}^n)$,
\begin{eqnarray}
\mu_n(K)=\int_{\Phi_p(K)}\frac{1}{\sqrt{1+\|y\|^2}}dy.
\end{eqnarray}
By the same proof process as that of Lemma \ref{LMeasure}, we have
\begin{eqnarray}\label{IStildeK}
\int_{S\Phi_p(K)}\frac{1}{\sqrt{1+\|y\|^2}}dy\geq\int_{\Phi_p(K)}\frac{1}{\sqrt{1+\|y\|^2}}dy,
\end{eqnarray}
with equality if and only if $S\Phi_p(K)=\Phi_p(K)$.

\begin{lem}\label{equivalent}
For $K\subset\mathbb{H}^n$, $K\in\mathcal{S}_o(\mathbb{H}^n)$  is equivalent to $\Phi_p(K)\in\mathcal{S}_o(\mathbb{R}^n)$. Moreover,
$K\in\mathcal{S}_B(\mathbb{H}^n)$  is equivalent to $\Phi_p(K)\in\mathcal{S}_B(\mathbb{R}^n)$.
\end{lem}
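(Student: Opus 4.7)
My strategy rests on the observation that $\Phi:\mathbb{B}^n\to\mathbb{R}^n$ is a \emph{radial} $C^{\infty}$-diffeomorphism: writing $\Phi(x)=\lambda(\|x\|)\,x/\|x\|$ with $\lambda(t):=2t/(1-t^2)$ (and $\Phi(o)=o$), the function $\lambda$ is a strictly increasing homeomorphism of $[0,1)$ onto $[0,\infty)$. Hence $\Phi$ bijectively maps each ray from the origin onto itself, is monotone along every such ray, and sends Euclidean balls centered at $o$ to Euclidean balls centered at $o$, namely $\Phi(B_o(r))=B_o(\lambda(r))$. I would use this radial structure throughout.

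For the $\mathcal{S}_o$ part, unwrapping the definition, $K\in\mathcal{S}_o(\mathbb{H}^n)$ is exactly the statement that $P(K)$ is a star body with respect to $o$ in $\mathbb{R}^n$. For any compact $L\subset\mathbb{B}^n$ that is star-shaped with respect to $o$, the fact that $\Phi$ acts by $\lambda$ along each ray gives
\[
\rho_{\Phi(L)}(u)\;=\;\lambda\bigl(\rho_L(u)\bigr)\qquad\text{for every } u\in\mathbb{S}^{n-1}.
\]
Because $\lambda$ is a homeomorphism of $[0,1)$ onto $[0,\infty)$, positivity and continuity of $\rho_L$ on $\mathbb{S}^{n-1}$ transfer to $\rho_{\Phi(L)}$ and conversely. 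Specializing to $L=P(K)$ and using $\Phi_p=\Phi\circ P$ yields $K\in\mathcal{S}_o(\mathbb{H}^n)\iff\Phi_p(K)\in\mathcal{S}_o(\mathbb{R}^n)$.

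For the $\mathcal{S}_B$ part, the delicate point is that $\Phi$ does \emph{not} preserve straight lines through points other than $o$, so the ball-wise star-shapedness is not preserved ray by ray. My plan is to go through the standard characterization that $L$ is a star body with respect to some centered ball $B_o(r)$ if and only if $L$ is a star body with respect to $o$ whose radial function $\rho_L$ is Lipschitz continuous on $\mathbb{S}^{n-1}$, with the Lipschitz constant and the radius $r$ related explicitly. Granting this, if $P(K)\in\mathcal{S}_B(\mathbb{R}^n)$ then $\rho_{P(K)}$ is Lipschitz on $\mathbb{S}^{n-1}$, and since $\lambda$ is $C^\infty$ with bounded derivative on the compact range $[\min\rho_{P(K)},\max\rho_{P(K)}]\subset[0,1)$, the composition $\rho_{\Phi_p(K)}=\lambda\circ\rho_{P(K)}$ is Lipschitz as well. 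This delivers $\Phi_p(K)\in\mathcal{S}_B(\mathbb{R}^n)$. The reverse implication runs the same argument with $\Phi^{-1}(y)=y/\bigl(1+\sqrt{1+\|y\|^2}\bigr)$ in place of $\Phi$, whose radial part $\lambda^{-1}(s)=s/(1+\sqrt{1+s^2})$ is again smooth with bounded derivative on compact subsets of $[0,\infty)$.

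The main obstacle is establishing (or rigorously invoking) the Lipschitz characterization of $\mathcal{S}_B$ and tracking how the radii of the balls transform quantitatively under $\Phi$ and $\Phi^{-1}$. A more hands-on alternative, which I would fall back on if the clean Lipschitz characterization is not available in the exact form needed, is to use Lemma~\ref{Starbodyboundary}: decompose $\partial P(K)$ into the graph pieces $f_{m,j},g_{m,j}$ over open subsets of $P(K)|_{u^\perp}$, push these pieces through $\Phi$ (which is smooth off $\partial\mathbb{B}^n$ and hence preserves the graph representation), and verify that the resulting decomposition of $\partial\Phi_p(K)$ satisfies (i)-(iii) of that lemma; together with $\Phi_p(K)\in\mathcal{S}_o(\mathbb{R}^n)$ this gives $\Phi_p(K)\in\mathcal{S}_B(\mathbb{R}^n)$. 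Either approach hinges on the smoothness and radial monotonicity of $\Phi$, which are clear from the explicit formula.
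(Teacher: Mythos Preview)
Your proposal is correct and follows essentially the same route as the paper: both arguments pass from the radial formula $\rho_{\Phi(L)}(u)=\lambda(\rho_L(u))$ (equivalently, the paper's relations (\ref{rhoPhip})--(\ref{rhop})) to transfer positivity and continuity for the $\mathcal{S}_o$ part, and then invoke the Lipschitz characterization of $\mathcal{S}_B$---precisely \cite[Theorem~2.1]{LW23}, which is the ``standard characterization'' you were looking for---together with the smoothness of $\lambda$ and $\lambda^{-1}$ on compact ranges to handle the $\mathcal{S}_B$ part. Your fallback via Lemma~\ref{Starbodyboundary} is unnecessary once that reference is in hand.
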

\begin{proof}
By (\ref{y}) and (\ref{x}), $P(K)$ is a star-shaped set with respect to origin if and only if $\Phi_p(K)$ is a star-shaped set with respect to origin in $\mathbb{R}^n$. Moreover,
by (\ref{y}) and (\ref{x}), we have
\begin{eqnarray}\label{rhoPhip}
\rho_{\Phi_p(K)}(x)=\frac{2\rho_{P(K)}(x)}{1-\|x\|^2\rho^2_{P(K)}(x)},\;\;x\in\mathbb{R}^n\backslash\{o\}
\end{eqnarray}
and
\begin{eqnarray}\label{rhop}
\rho_{P(K)}(x)=\frac{\rho_{\Phi_p(K)}(x)}{1+\sqrt{1+\|x\|^2\rho^2_{\Phi_p(K)}(x)}},\;\;x\in\mathbb{R}^n\backslash\{o\}.
\end{eqnarray}
Therefore, $\rho_{P(K)}$ is strictly positive and continuous in $\mathbb{R}^n\backslash\{o\}$ if and only if $\rho_{\Phi_p(K)}$ is strictly positive and continuous in $\mathbb{R}^n\backslash\{o\}$. Moreover, by (\ref{rhoPhip}) and (\ref{rhop}), $\rho_{P(K)}$ is locally Lipschitz continuous in $\mathbb{R}^n\backslash\{o\}$ if and only if $\rho_{\Phi_p(K)}$ is locally Lipschitz continuous in $\mathbb{R}^n\backslash\{o\}$. By \cite[Theorem 2.1]{LW23}, a Lipschitz star body (its radial function is locally Lipschitz continuous in $\mathbb{R}^n\backslash\{o\}$) is a star body with respect to a ball and vice versa.
This shows the desired conclusion.
\end{proof}

\begin{definition}\label{hpb}
For $K\in\mathcal{S}_o(\mathbb{H}^n)$, its {\it hyperbolic polar body} $K^{\circ}$ is defined by
\begin{eqnarray}\label{hyperstar}
K^{\circ}:=\Phi_p^{-1}\left(\left(\Phi_p(K)\right)^{\ast}\right).
\end{eqnarray}
\end{definition}

By the above definition and Lemma \ref{equivalent}, if $K\in\mathcal{S}_o(\mathbb{H}^n)$, then $K^{\circ}\in\mathcal{S}_o(\mathbb{H}^n)$.

\subsection{Hyperbolic Steiner symmetrization}
\begin{definition}\label{HSsymmetrization}
For $K\in\mathcal{S}_B(\mathbb{H}^n)$, its {\it hyperbolic Steiner symmetrization} $\check{S}(K)$ is defined by
\begin{eqnarray}\label{hyperSteiner}
\check{S}(K):=\Phi_p^{-1}\left(r_KS\Phi_p(K)\right),
\end{eqnarray}
where $r_K\in (0,1]$ satisfies $\mu_n\left(\check{S}(K)\right)=\mu_n(K)$.
\end{definition}

By \cite[Lemma 5.1]{LW23}, if $\tilde{K}\in\mathcal{S}_B(\mathbb{R}^n)$, then $S\tilde{K}\in\mathcal{S}_B(\mathbb{R}^n)$.
Moreover, it is clear that $rS\tilde{K}\in \mathcal{S}_B(\mathbb{R}^n)$ for $r\in (0,1]$. Thus, the hyperbolic Steiner symmetrization maintains the property of star bodies, i.e., if $K\in\mathcal{S}_B(\mathbb{H}^n)$, then $\check{S}(K)\in\mathcal{S}_B(\mathbb{H}^n)$. Moreover, by Definition \ref{HSsymmetrization}, the hyperbolic Steiner symmetrization maintains the invariance of $\mu_n$ measure.

Similarly, for compact set $K\in\mathbb{H}^n$, we define the {\it hyperbolic symmetric rearrangement} $K^{\star}$ as following
\begin{eqnarray}
K^{\star}:=\left\{v\in\mathbb{H}^n:\;ds^2(P(v),o)\leq \alpha^2,\;\;\mu_n(K)=\mu_n(B_h(\alpha))\right\}.
\end{eqnarray}

Next, we prove the convergence of hyperbolic Steiner symmetrizations.
 \begin{lem}\label{LchSs}
 For any $K\in \mathcal{S}_B(\mathbb{H}^n)$, there exists a sequence of directions $\{u_i\}_{i=1}^{\infty}\subset\mathbb{S}^{n-1}$ such that
\begin{eqnarray}
\lim_{i\rightarrow\infty}d_h\left(\check{S}_{u_i}\cdots\check{S}_{u_1}(K),K^\star\right)=0.
\end{eqnarray}
\end{lem}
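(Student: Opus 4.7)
The plan is to transfer the problem to Euclidean space via the map $\Phi_p$ and then invoke the Euclidean convergence result (Lemma \ref{convergesstarbodies}) together with Lemma \ref{equivalent}. Set $\tilde{K} := \Phi_p(K)$, which lies in $\mathcal{S}_B(\mathbb{R}^n)$ by Lemma \ref{equivalent}. Lemma \ref{convergesstarbodies} furnishes a sequence $\{u_i\} \subset \mathbb{S}^{n-1}$ such that $\tilde{K}_i := S_{u_i} \cdots S_{u_1} \tilde{K}$ converges to $\tilde{K}^\star$ in radial distance, hence (by Lemma \ref{conHaurRad}) also in Euclidean Hausdorff distance. I would use the same sequence $\{u_i\}$ for the hyperbolic iterates $K_0 := K$, $K_i := \check{S}_{u_i}(K_{i-1})$.

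The key reduction is a tracking identity for the cumulative scaling. Using Definition \ref{HSsymmetrization} together with the elementary homogeneity $S_u(tA) = t\,S_u(A)$ for $t>0$, an induction yields
\begin{equation*}
\Phi_p(K_i) \;=\; R_i\,\tilde{K}_i, \qquad R_i := r_1 r_2 \cdots r_i \in (0,1],
\end{equation*}
where $r_j \in (0,1]$ is the normalising constant from Definition \ref{HSsymmetrization} that enforces $\mu_n(K_j)=\mu_n(K_{j-1})$. Because each $r_j \in (0,1]$, the sequence $\{R_i\}$ is non-increasing and therefore $R_i \to \bar{R} \in [0,1]$; combined with $\tilde{K}_i \to \tilde{K}^\star$ in Hausdorff distance this yields $R_i \tilde{K}_i \to \bar{R}\,\tilde{K}^\star$ in Euclidean Hausdorff distance.

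To conclude I would exclude the degenerate case $\bar{R}=0$: if $\bar{R}$ were $0$, dominated convergence applied to the identity $\mu_n(K_i) = \int_{R_i \tilde{K}_i}(1+\|y\|^2)^{-1/2}\,dy$ (which follows from (\ref{dmuny})) would force $\mu_n(K_i) \to 0$, contradicting the volume invariance $\mu_n(K_i) = \mu_n(K) > 0$ built into Definition \ref{HSsymmetrization}. With $\bar{R} > 0$, continuity of $\Phi_p^{-1}$ on compact subsets of $\mathbb{R}^n$---evident from the explicit formula (\ref{x}) and the local equivalence of the Poincar\'e metric with the Euclidean one away from $\partial\mathbb{B}^n$---transfers the convergence to the hyperbolic Hausdorff distance, yielding $K_i \to \Phi_p^{-1}(\bar{R}\,\tilde{K}^\star)$ in $d_h$. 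Since $\bar{R}\,\tilde{K}^\star$ is a Euclidean ball centred at the origin, its $\Phi_p^{-1}$-image is a hyperbolic ball centred at $e_{n+1}$ whose hyperbolic volume equals $\lim_i \mu_n(K_i) = \mu_n(K) = \mu_n(K^\star)$; by uniqueness of hyperbolic balls at $e_{n+1}$ with prescribed measure, this limit coincides with $K^\star$.

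The main obstacle I expect is the scaling bookkeeping: verifying the tracking identity requires carefully combining $\Phi_p(\check{S}_{u_i}(\cdot)) = r_i\,S_{u_i}\Phi_p(\cdot)$ with the scaling inherited from the previous step, and the positivity of $\bar{R}$ hinges on the integrability of the weight $(1+\|y\|^2)^{-1/2}$ together with the volume-preservation property of $\check{S}$. Once these two technical points are in place, the remainder---identifying the limit via Hausdorff--continuity of $\Phi_p^{-1}$ and uniqueness of centred hyperbolic balls---is a direct adaptation of the spherical argument in Lemma \ref{Pconverge}.
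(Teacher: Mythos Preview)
Your proposal is correct and follows essentially the same route as the paper: transfer to $\mathbb{R}^n$ via $\Phi_p$, apply the Euclidean convergence of iterated Steiner symmetrizations, track the cumulative scaling product $R_i=r_1\cdots r_i$, pass to the limit using the comparison $ds^2\le |dy|^2$ (which is exactly your ``continuity of $\Phi_p^{-1}$'' step), and identify the limiting ball as $K^\star$ via volume preservation. The paper's argument is nearly identical---it cites a different reference for the Euclidean convergence and bounds $d_h$ by $d_H$ directly via \eqref{ds2dy2} rather than phrasing it as continuity---and in fact you are slightly more careful in explicitly ruling out $\bar R=0$, which the paper leaves implicit.
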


\begin{proof}
Since $K\in \mathcal{S}_B(\mathbb{H}^n)$, $\Phi_p(K)\in\mathcal{S}_B(\mathbb{R}^n)$. By \cite[Theorem 3.1]{Lin21}, for the compact set $\Phi_p(K)$, there exists a sequence of directions $\{u_i\}_{i=1}^{\infty}\subset\mathbb{S}^{n-1}$ and a ball $B(r_o)\subset\mathbb{R}^n$ with the same volume as $\Phi_p(K)$
such that
\begin{eqnarray}\label{limd}
\lim_{i\rightarrow\infty}d_H\left(S_{u_i}\cdots S_{u_1}\Phi_p(K),B(r_o)\right)=0.
\end{eqnarray}
Let $r_1\in(0,1]$ satisfy
$$\Phi_p^{-1}\left(r_1S_{u_1}\Phi_p(K)\right)=\check{S}_{u_1}(K).$$
Let $r_2\in(0,1]$ satisfy
$$\Phi_p^{-1}\left(r_2S_{u_2}\Phi_p(\check{S}_{u_1}(K))\right)=\check{S}_{u_2}\check{S}_{u_1}(K).$$
Repeating the previous argument, we get a sequence of positive real numbers $\{r_i\}_{i=1}^{\infty}$ satisfying $r_i\in(0,1]$ and
\begin{eqnarray}\label{Phip-1}
\Phi_p^{-1}\left(r_iS_{u_i}\Phi_p(\check{S}_{u_{i-1}}\cdots\check{S}_{u_1}(K))\right)=\check{S}_{u_i}\check{S}_{u_{i-1}}\cdots\check{S}_{u_1}(K).
\end{eqnarray}
Let $\bar{r}:=\lim_{i\rightarrow\infty}(r_ir_{i-1}\cdots r_1)$, by (\ref{Phip-1}), (\ref{ds2dy2}) and (\ref{limd}),
\begin{eqnarray*}
&&\lim_{i\rightarrow\infty}d_h\left(\check{S}_{u_i}\cdots \check{S}_{u_1}(K),\Phi_p^{-1}\left(r_i\cdots r_1B(r_o)\right)\right)\nonumber\\
&=&\lim_{i\rightarrow\infty}d_h\left(\Phi_p^{-1}\left(r_i\cdots r_1S_{u_i}\cdots S_{u_1}\Phi_p(K)\right),\Phi_p^{-1}\left(r_i\cdots r_1B(r_o)\right)\right)\nonumber\\
&\leq&\lim_{i\rightarrow\infty}d_H\left(r_i\cdots r_1S_{u_i}\cdots S_{u_1}\Phi_p(K),r_i\cdots r_1B(r_o)\right)\nonumber\\
&=&\bar{r}\lim_{i\rightarrow\infty}d_H\left(S_{u_i}\cdots S_{u_1}\Phi_p(K),B(r_o)\right)\nonumber\\
&=&0.
\end{eqnarray*}
Since $\mu_n\left(\check{S}_{u_i}\cdots \check{S}_{u_1}(K)\right)=\mu_n(K)$ for any $i\in\mathbb{N}$, $K^\star=\Phi_p^{-1}\left(\bar{r}B(r_o)\right)$. This completes the proof.
\end{proof}

\subsection{Hyperbolic projection body}
\begin{definition}\label{hprob} For $K\in \mathcal{S}_B(\mathbb{H}^n)$, its {\it hyperbolic projection body} ${\rm \Pi}_h(K)$ is defined by
\begin{eqnarray}\label{DPihK}
{\rm \Pi}_h(K):=\Phi^{-1}_p\left({\rm \Pi} \left(\Phi_p(K)\right)\right).
\end{eqnarray}
\end{definition}
By (\ref{dmuny}),
\begin{eqnarray}\label{mundrdu}
\mu_n\left({\rm \Pi}_h(K)\right)=\int_{{\rm \Pi} \left(\Phi_p(K)\right)}\frac{1}{\sqrt{1+\|y\|^2}}dy=\int_{\mathbb{S}^{n-1}}\int_o^{\rho_{{\rm \Pi} \left(\Phi_p(K)\right)}(u)}\frac{r^{n-1}}{\sqrt{1+r^2}}drdu.
\end{eqnarray}

Next, we show that the hyperbolic projection operator $\mathcal{S}_B(\mathbb{H}^n)\rightarrow\mathcal{S}_B(\mathbb{H}^n)$ is continuous.
\begin{lem}\label{Lconhpoperator}
For $K_i\in\mathcal{S}_B(\mathbb{H}^n)$, $i=0,1,2,\dots$, if
\begin{eqnarray}\label{limKi}
\lim_{i\rightarrow\infty}d_h\left(K_i,K_o\right)=0,
\end{eqnarray}
then
\begin{eqnarray}\label{limPiKi}
\lim_{i\rightarrow\infty}d_h\left({\rm \Pi}_h(K_i),{\rm \Pi}_h(K_o)\right)=0.
\end{eqnarray}
\end{lem}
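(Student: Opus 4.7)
The plan is to reduce the hyperbolic statement to its Euclidean analogue (Lemma \ref{projectionbodyconvergece}) by means of the transformation $\Phi_p$, then transport the conclusion back to $\mathbb{H}^n$ using that $\Pi_h$ is defined, via (\ref{DPihK}), as $\Phi_p^{-1}\circ \Pi \circ \Phi_p$.

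First I would verify that hyperbolic Hausdorff convergence $d_h(K_i,K_0)\to 0$ implies Euclidean Hausdorff convergence $d_H(\Phi_p(K_i),\Phi_p(K_0))\to 0$. Since $\Phi_p=\Phi\circ P$ is a diffeomorphism from $\mathbb{H}^n$ onto $\mathbb{R}^n$, and the metric identity (\ref{ds2y}) together with the two-sided estimate (\ref{ds2dy2}) shows that $ds$ is bi-Lipschitz equivalent to the Euclidean metric $|dy|$ on any bounded set of $\mathbb{R}^n$, uniform boundedness of the sets $\Phi_p(K_i)$ (a consequence of $d_h$-convergence and Lemma \ref{equivalent}) transfers $d_h$-convergence into $d_H$-convergence of the images.

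Next, by Lemma \ref{equivalent} each $\Phi_p(K_i)$ lies in $\mathcal{S}_B(\mathbb{R}^n)$, so Lemma \ref{conHaurRad} converts Euclidean Hausdorff convergence into radial convergence of the $\Phi_p(K_i)$ to $\Phi_p(K_0)$. To invoke Lemma \ref{projectionbodyconvergece}, I must also establish perimeter convergence $\mathcal{H}^{n-1}(\partial \Phi_p(K_i))\to \mathcal{H}^{n-1}(\partial \Phi_p(K_0))$. I would do this by applying the boundary decomposition of Lemma \ref{Starbodyboundary} to each $\Phi_p(K_i)$, writing the perimeter via the formula (\ref{performula}), and then passing to the limit using radial convergence together with the locally Lipschitz control on the radial functions that is inherent to star bodies with respect to a ball (this control is precisely what underlies Lemma \ref{equivalent}). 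I expect this perimeter convergence step to be the main technical obstacle, since pointwise radial convergence alone does not in general imply convergence of the $(n-1)$-dimensional surface measures; one needs to exploit the uniform Lipschitz-type structure coming from the assumption that all $K_i$ lie in $\mathcal{S}_B(\mathbb{H}^n)$ (equivalently $\Phi_p(K_i)\in\mathcal{S}_B(\mathbb{R}^n)$) to produce a common dominant for the integrand in (\ref{performula}) and apply a dominated convergence argument.

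Once the Euclidean hypotheses of Lemma \ref{projectionbodyconvergece} are met, it follows that $\Pi(\Phi_p(K_i))\to \Pi(\Phi_p(K_0))$ in Euclidean Hausdorff distance. Finally, since all the projection bodies $\Pi(\Phi_p(K_i))$ stay inside a fixed Euclidean ball (by the uniform boundedness of the $\Phi_p(K_i)$ and the support function formula (\ref{Dsupportfunction})), applying $\Phi_p^{-1}$ and invoking once more the bi-Lipschitz equivalence from (\ref{ds2dy2}) on that ball yields $d_h(\Pi_h(K_i),\Pi_h(K_0))\to 0$, which is exactly (\ref{limPiKi}).
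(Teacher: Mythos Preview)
Your proposal is correct and follows essentially the same route as the paper's proof: reduce to the Euclidean setting via $\Phi_p$ using the metric comparison (\ref{ds2dy2}), apply Lemma~\ref{conHaurRad} to pass from Hausdorff to radial convergence, invoke (\ref{performula}) for perimeter convergence, feed these into Lemma~\ref{projectionbodyconvergece}, and then transport back with $\Phi_p^{-1}$ using (\ref{ds2dy2}) again. The only difference is one of emphasis: where you flag the perimeter convergence $\mathcal{H}^{n-1}(\partial\Phi_p(K_i))\to\mathcal{H}^{n-1}(\partial\Phi_p(K_0))$ as the main technical step and sketch a dominated convergence argument based on uniform Lipschitz control of the radial functions, the paper simply asserts it as a consequence of radial convergence and formula (\ref{performula}) without further comment.
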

\begin{proof}
By the relation between $ds^2$ and $|dy|^2$ (see (\ref{ds2dy2})), if (\ref{limKi}) holds, then
$$\lim_{i\rightarrow\infty}d_H\left(\Phi_p(K_i),\Phi_p(K_o)\right)=0.$$
By \cite[Theorem 2.2]{LW23} and the above equality, $\Phi_p(K_i)$ converges to $\Phi_p(K_o)$ in radial distance. Thus by (\ref{performula}), their perimeters satisfy
$$\lim_{i\rightarrow\infty}\mathcal{H}^n\left(\partial \Phi_p(K_i)\right)=\mathcal{H}^n\left(\partial \Phi_p(K_o)\right).$$
By the above equality and the continuity of projection operator (see \cite[Proposition 4.1]{Lin21}), we have
\begin{eqnarray}\label{limzero}
\lim_{i\rightarrow\infty}d_H\left({\rm \Pi}\left(\Phi_p(K_i)\right),{\rm \Pi}\left(\Phi_p(K_o)\right)\right)=0.
\end{eqnarray}
By (\ref{DPihK}), (\ref{ds2dy2}) and (\ref{limzero}),
\begin{eqnarray*}
\lim_{i\rightarrow\infty}d_h\left({\rm \Pi}_h(K_i),{\rm \Pi}_h(K_o)\right)&=&\lim_{i\rightarrow\infty}d_h\left(\Phi_p^{-1}\left({\rm \Pi}\left(\Phi_p(K_i)\right)\right),\Phi_p^{-1}\left({\rm \Pi}\left(\Phi_p(K_o)\right)\right)\right)\nonumber\\
&\leq&\lim_{i\rightarrow\infty}d_H\left({\rm \Pi}\left(\Phi_p(K_i)\right),{\rm \Pi}\left(\Phi_p(K_o)\right)\right)=0.
\end{eqnarray*}
This completes the proof.
\end{proof}

The following lemma shows that the rotation invariance of the hyperbolic projection operator.
\begin{lem}
If $K\in\mathcal{S}_B(\mathbb{H}^n)$ and $\phi\in \bar{{\rm O}}(n+1)$, then
\begin{eqnarray}
{\rm \Pi}_h(\phi K)=\phi{\rm \Pi}_h(K).
\end{eqnarray}
\end{lem}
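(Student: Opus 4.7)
The plan is to mimic the argument used for the spherical rotation-covariance lemma, transporting the statement from $\mathbb{H}^n$ to $\mathbb{R}^n$ via $\Phi_p$ and then exploiting the known equivariance of the Euclidean projection operator under orthogonal transformations.

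The first step is to establish that $\Phi_p$ intertwines rotations around the $x_{n+1}$-axis with ordinary orthogonal transformations of $\mathbb{R}^n$. Concretely, any $\phi \in \bar{{\rm O}}(n+1)$ fixes $e_{n+1}$ and $-e_{n+1}$ and restricts to an orthogonal map $\bar\phi \in {\rm O}(n)$ on the hyperplane $\mathbb{R}^n = e_{n+1}^{\perp}$. Since the Poincar\'e projection $P$ is defined by intersecting the line from $o' = -e_{n+1}$ through a point of $\mathbb{H}^n$ with $\mathbb{R}^n$, and $\phi$ fixes $o'$ and maps lines to lines, one obtains $P \circ \phi = \bar\phi \circ P$ on $\mathbb{H}^n$. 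The map $\Phi$ is purely radial in $\mathbb{R}^n$ by \eqref{y}, so $\Phi \circ \bar\phi = \bar\phi \circ \Phi$ on $\mathbb{B}^n$. Composing these two identities gives the desired equivariance
\begin{equation*}
\Phi_p(\phi K) = \bar\phi\, \Phi_p(K).
\end{equation*}

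The second step is to invoke the corresponding Euclidean fact. The Euclidean projection body operator commutes with orthogonal transformations, i.e., ${\rm \Pi}(\bar\phi L) = \bar\phi\, {\rm \Pi}(L)$ for every $L \in \mathcal{S}_B(\mathbb{R}^n)$; this is exactly the identity used in the proof of the spherical rotation-covariance lemma (the authors cite \cite[Lemma~6.4]{LX22}). Since $K \in \mathcal{S}_B(\mathbb{H}^n)$ implies $\Phi_p(K) \in \mathcal{S}_B(\mathbb{R}^n)$ by Lemma \ref{equivalent}, this identity applies with $L = \Phi_p(K)$.

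The third step is simply to chain the identities, exactly as in the spherical proof:
\begin{equation*}
{\rm \Pi}_h(\phi K) = \Phi_p^{-1}\bigl({\rm \Pi}(\Phi_p(\phi K))\bigr) = \Phi_p^{-1}\bigl({\rm \Pi}(\bar\phi\, \Phi_p(K))\bigr) = \Phi_p^{-1}\bigl(\bar\phi\, {\rm \Pi}(\Phi_p(K))\bigr) = \phi\, \Phi_p^{-1}\bigl({\rm \Pi}(\Phi_p(K))\bigr) = \phi\, {\rm \Pi}_h(K),
\end{equation*}
where the first and last equalities use \eqref{DPihK}, the second and fourth use the equivariance of $\Phi_p$ established in the first step (applied also to $\Phi_p^{-1}$), and the third uses the Euclidean equivariance. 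The only point that requires any care is the first step, i.e., checking that $\Phi_p$ genuinely conjugates the $\bar{{\rm O}}(n+1)$-action on $\mathbb{H}^n$ to the ${\rm O}(n)$-action on $\mathbb{R}^n$; once this is in hand the rest is formal. I do not expect a serious obstacle.
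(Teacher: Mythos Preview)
Your proposal is correct and follows essentially the same route as the paper: assert the equivariance $\Phi_p(\phi K)=\bar\phi\,\Phi_p(K)$ for some $\bar\phi\in{\rm O}(n)$, invoke \cite[Lemma~6.4]{LX22} for ${\rm \Pi}(\bar\phi\,\Phi_p(K))=\bar\phi\,{\rm \Pi}(\Phi_p(K))$, and chain through the definition~\eqref{DPihK}. In fact you give more justification for the equivariance of $\Phi_p$ than the paper does, since the paper simply states it without the decomposition into $P\circ\phi=\bar\phi\circ P$ and $\Phi\circ\bar\phi=\bar\phi\circ\Phi$.
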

\begin{proof}
For $\phi\in\bar{{\rm O}}(n+1)$, there exists a rotation transformation $\bar{\phi}\in{\rm O}(n)$ in $\mathbb{R}^n$ such that
$$\Phi_p(\phi(K))=\bar{\phi}\left(\Phi_p(K)\right).$$
By the definition of hyperbolic projection body (\ref{DPihK}) and the affine invariance of Euclidean projection body on Lipschitz star bodies (see \cite[Lemma 6.4]{LX22}), we have
\begin{eqnarray*}
{\rm \Pi}_h(\phi K)&=&\Phi_p^{-1}\left({\rm \Pi}\left(\Phi_p\left(\phi K\right)\right)\right)=\Phi_p^{-1}\left({\rm \Pi}\left(\bar{\phi}\Phi_p\left( K\right)\right)\right)\nonumber\\
&=&\Phi_p^{-1}\left(\bar{\phi}{\rm \Pi}\left(\Phi_p\left( K\right)\right)\right)
=\phi\Phi_p^{-1}\left({\rm \Pi}\left(\Phi_p\left( K\right)\right)\right)
=\phi{\rm \Pi}_h(K).
\end{eqnarray*}
This completes the proof.
\end{proof}

\subsection{Hyperbolic projection inequality}

\begin{lem}\label{LMDPofmun}
For $K\in\mathcal{S}_B(\mathbb{H}^n)$, we have
\begin{eqnarray}\label{Increasingmun}
\mu_n\left({\rm \Pi}^{\circ}_h(K)\right)\leq \mu_n\left({\rm \Pi}^{\circ}_h(\check{S}K)\right),
\end{eqnarray}
with equality if and only if $K=\check{S}K$.
\end{lem}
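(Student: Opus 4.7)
The plan is to mirror the proof of Lemma \ref{Ldecreasing} almost verbatim, replacing the gnomonic projection $g$ with the Poincar\'e--$\Phi$ composite $\Phi_p$, the spherical density $(1+\|y\|^2)^{-(n+1)/2}$ with the hyperbolic density $(1+\|y\|^2)^{-1/2}$, and invoking inequality (\ref{IStildeK}) in place of Lemma \ref{LMeasure}. First I would note that since $K\in\mathcal{S}_B(\mathbb{H}^n)$, Lemma \ref{equivalent} yields $\Phi_p(K)\in\mathcal{S}_B(\mathbb{R}^n)$, so Lemma \ref{Starbodyboundary} provides the disjoint open subsets $G_m\subset\Phi_p(K)'$ and graph functions $f_{m,j},g_{m,j}:G_m\to\mathbb{R}$ describing $\partial\Phi_p(K)$ up to an $\mathcal{H}^{n-1}$-null set. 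Writing $r_K\in(0,1]$ for the dilation factor in Definition \ref{HSsymmetrization}, the set $r_KS\Phi_p(K)$ admits the analogous representation (\ref{overunderSgK})--(\ref{gKandSgK}) with the scalar $r_K$ multiplying the overgraph and undergraph functions.

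Next I would take any $(z',t),(z',-s)\in\partial{\rm \Pi}^{\ast}(\Phi_p(K))$ with $t\neq -s$, that is, $h({\rm \Pi}(\Phi_p(K)),z',t)=h({\rm \Pi}(\Phi_p(K)),z',-s)=1$, and run the estimate of (\ref{Ihleq1}) verbatim. The two crucial mechanisms remain intact: the monotonic increase of $|a+bt|+|a-bt|$ in $t>0$ lets $r_K\le 1$ absorb into the first inequality, and the triangle inequality plus (\ref{gKandSgK}) produces the average of the two support function values. This gives
\begin{equation*}
h\!\left({\rm \Pi}(r_KS\Phi_p(K)),z',\tfrac{t+s}{2}\right)\leq 1,
\end{equation*}
so Lemma \ref{Lprojectionbody} yields $S{\rm \Pi}^{\ast}(\Phi_p(K))\subseteq {\rm \Pi}^{\ast}(r_KS\Phi_p(K))$.

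Now I would apply (\ref{IStildeK}) to the set ${\rm \Pi}^{\ast}(\Phi_p(K))$ and then use the inclusion just obtained to conclude
\begin{equation*}
\int_{{\rm \Pi}^{\ast}(\Phi_p(K))}\frac{dy}{\sqrt{1+\|y\|^2}}\leq \int_{S{\rm \Pi}^{\ast}(\Phi_p(K))}\frac{dy}{\sqrt{1+\|y\|^2}}\leq \int_{{\rm \Pi}^{\ast}(r_KS\Phi_p(K))}\frac{dy}{\sqrt{1+\|y\|^2}}.
\end{equation*}
By Definitions \ref{hpb}, \ref{hprob} and \ref{HSsymmetrization} one has $\Phi_p({\rm \Pi}_h^{\circ}(K))={\rm \Pi}^{\ast}(\Phi_p(K))$ and $\Phi_p({\rm \Pi}_h^{\circ}(\check SK))={\rm \Pi}^{\ast}(r_KS\Phi_p(K))$, so the chain above translates, via (\ref{dmuny}), into (\ref{Increasingmun}). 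For the equality case, $\mu_n({\rm \Pi}_h^{\circ}(K))=\mu_n({\rm \Pi}_h^{\circ}(\check SK))$ forces equality in both the first inequality of the displayed calculation (which, by arbitrariness of $z'$, forces $r_K=1$) and in (\ref{IStildeK}) applied to ${\rm \Pi}^{\ast}(\Phi_p(K))$; together with Definition \ref{HSsymmetrization} this gives $\check SK=K$, following the endgame of Lemma \ref{Ldecreasing}.

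The only genuinely new point to verify is that the weight $(1+\|y\|^2)^{-1/2}$ is still an even unimodal function of $y_n$ with $y'$ fixed; this is what underlies (\ref{IStildeK}) (proved by Hardy--Littlewood exactly as in Lemma \ref{LMeasure}), and is precisely what is needed to apply the same rearrangement step in the hyperbolic setting. Beyond that, the proof is bookkeeping: the main obstacle is simply tracking the polar/Steiner operations faithfully through $\Phi_p$ so that ${\rm \Pi}_h^{\circ}(\check SK)=\Phi_p^{-1}({\rm \Pi}^{\ast}(r_KS\Phi_p(K)))$ is the object whose $\mu_n$-measure is being compared.
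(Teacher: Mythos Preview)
Your overall strategy is correct and close to the paper's, but there is one concrete slip in how $r_K$ enters. In Definition~\ref{HSsymmetrization} the hyperbolic symmetral is $\check{S}K=\Phi_p^{-1}(r_K\, S\Phi_p(K))$ with a \emph{uniform} dilation $r_K\, S\Phi_p(K)$, whereas the spherical object $\bar{S}g(K)$ in (\ref{DbarS}) scales only in the $e_n$-coordinate. Thus $r_K\, S\Phi_p(K)$ does \emph{not} have overgraph $r_K\overline{\varrho}(x')$ on the original shadow $K'$: its projection onto $e_n^\perp$ is $r_K K'$, and its overgraph at $y'$ is $r_K\overline{\varrho}(y'/r_K)$. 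Consequently the first line of (\ref{Ihleq1}) is not valid for $\Pi(r_K S\Phi_p(K))$ as written, and the monotonicity of $t\mapsto|a+bt|+|a-bt|$ is not the mechanism that absorbs $r_K$ here.

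The repair is immediate: from $\Pi(r_K L)=r_K^{\,n-1}\Pi(L)$ one gets $\Pi^\ast(r_K S\Phi_p(K))=r_K^{-(n-1)}\Pi^\ast(S\Phi_p(K))\supseteq\Pi^\ast(S\Phi_p(K))$ since $r_K\le1$, and the (\ref{Ihleq1}) calculation with $r_K$ set to $1$ (equivalently, the paper's direct citation of \cite[Theorem~7.1]{LX22}) gives $S\Pi^\ast(\Phi_p(K))\subseteq\Pi^\ast(S\Phi_p(K))$. Chaining these two inclusions with (\ref{IStildeK}) yields (\ref{Increasingmun}) exactly as you intended; this is precisely the paper's chain (\ref{munl}). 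For the equality case, once $r_K=1$ is obtained, it is (\ref{IStildeK}) applied to $\Phi_p(K)$ itself---which is equivalent to $r_K=1$ via Definition~\ref{HSsymmetrization}---that forces $S\Phi_p(K)=\Phi_p(K)$ and hence $\check{S}K=K$; your pointer to ``(\ref{IStildeK}) applied to $\Pi^\ast(\Phi_p(K))$'' gives only $S\Pi^\ast(\Phi_p(K))=\Pi^\ast(\Phi_p(K))$, which does not by itself yield $\check{S}K=K$.
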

\begin{proof}
By Lemma \ref{equivalent}, if $K\in\mathcal{S}_B(\mathbb{H}^n)$, then $\Phi_p(K)\in\mathcal{S}_B(\mathbb{R}^n)$.
By \cite[Theorem 7.1]{LX22},
\begin{eqnarray}\label{theorem7.1}
S{\rm \Pi}^{\ast}\left(\Phi_p(K)\right)\subset{\rm \Pi}^{\ast}\left(S\Phi_p(K)\right).
\end{eqnarray}
Therefore,
\begin{eqnarray}\label{munl}
\mu_n\left({\rm \Pi}^{\circ}_h(\check{S}K)\right)&=&\int_{\Phi_p\left({\rm \Pi}^{\circ}_h(\check{S}K)\right)}\frac{1}{\sqrt{1+\|y\|^2}}dy=\int_{{\rm \Pi}^{\ast}\left(r_KS\Phi_p(K)\right)}\frac{1}{\sqrt{1+\|y\|^2}}dy\geq\int_{{\rm \Pi}^{\ast}\left(S\Phi_p(K)\right)}\frac{1}{\sqrt{1+\|y\|^2}}dy\nonumber\\
&\geq&\int_{S{\rm \Pi}^{\ast}\left(\Phi_p(K)\right)}\frac{1}{\sqrt{1+\|y\|^2}}dy\geq\int_{{\rm \Pi}^{\ast}\left(\Phi_p(K)\right)}\frac{1}{\sqrt{1+\|y\|^2}}dy=\mu_n\left({\rm \Pi}^{\circ}_h(K)\right),
\end{eqnarray}
where the first equality is from (\ref{dmuny}), the second equality from (\ref{hyperstar}),  (\ref{DPihK}) and (\ref{hyperSteiner}), the first inequality from $r_K\in(0,1]$, the second inequality from (\ref{theorem7.1}), the third inequality from (\ref{IStildeK}) and the last equality is from (\ref{dmuny}) and (\ref{DPihK}).

If the equality in (\ref{Increasingmun}) holds, then the equality in the first inequality of (\ref{munl}) holds. This implies $r_K=1$. By the sufficient and necessary conditions of the equality holds in (\ref{IStildeK}), $S\Phi_p(K)=\Phi_p(K)$. Thus, $K=\check{S}K$.
\end{proof}

\noindent{\bf Proof of Theorem \ref{THPI}.} By the convergence of hyperbolic Steiner symmetrizations (see Lemma \ref{LchSs}), for $K\in\mathcal{S}_B(\mathbb{H}^n)$, there exists a sequence of directions $\{u_i\}_{i=1}^{\infty}\subset\mathbb{S}^{n-1}$ such that the iterative hyperbolic Steiner symmetriztions $\check{S}_{u_i}\cdots\check{S}_{u_1}K$ converge to $K^\star$ in hyperbolic Hausdorff distance. Then by the continuity of hyperbolic projection operator (see Lemma \ref{Lconhpoperator}), the sequence of the hyperbolic projection bodies ${\rm \Pi}_h\left(\check{S}_{u_i}\cdots\check{S}_{u_1}K\right)$ converge to ${\rm \Pi}_h\left(K^\star\right)$ in hyperbolic Hausdorff distance. By the  monotonically increasing property of $\mu_n\left({\rm \Pi}_{\mathbb{H}}^{\circ}(K)\right)$ with respect to hyperbolic Steiner symmetrization (see Lemma \ref{LMDPofmun}), we have
\begin{eqnarray}\label{munleqmun}
\mu_n\left({\rm \Pi}_{\mathbb{H}}^{\circ}(K)\right)\leq\mu_n\left({\rm \Pi}_{\mathbb{H}}^{\circ}(K^\star)\right).
\end{eqnarray}

If there exists $u\in\mathbb{S}^{n-1}$ such that $K\neq\check{S}_uK$, then by Lemma \ref{LMDPofmun}, $\mu_n\left({\rm \Pi}^{\circ}_h(K)\right)<\mu_n\left({\rm \Pi}^{\circ}_h(\check{S}_uK)\right)$.
By (\ref{munleqmun}) and the above inequality,
$\mu_n\left({\rm \Pi}^{\circ}_h(K)\right)<\mu_n\left({\rm \Pi}_{\mathbb{H}}^{\circ}(K^\star\right)$.
Therefore, if the equality in (\ref{munleqmun}) holds, then for any direction $u\in\mathbb{S}^{n-1}$, $K=\check{S}_uK$. By the arbitrariness of $u\in\mathbb{S}^{n-1}$, $K=K^\star$. \qed

\subsection{Hyperbolic projection inequality and isoperimetric inequality on $\mathbb{H}^n$}
In this section, we show that the hyperbolic projection inequality is stronger than the hyperbolic isoperimetric inequality in the sense of $\Phi_p$ transformation.

On the one hand,
\begin{eqnarray}\label{muleft}
\mu_n\left({\rm \Pi}_{\mathbb{H}}^{\circ}(K)\right)&=&\mu_n\left(\Phi^{-1}_p\left(\Phi^{\ast}_p\left({\rm \Pi}_h(K)\right)\right)\right)=\mu_n\left(\Phi^{-1}_p\left({\rm \Pi}^{\ast}\Phi_p(K)\right)\right)\\
&=&\int_{{\rm \Pi}^{\ast}\left(\Phi_p(K)\right)}\frac{1}{\sqrt{1+\|y\|^2}}dy=\int_{\mathbb{S}^{n-1}}\int_{0}^{h^{-1}\left({\rm \Pi}\left(\Phi_p(K)\right),u\right)}\frac{r^{n-1}}{\sqrt{1+r^2}}drdu,\nonumber
\end{eqnarray}
where the first equality is from the definition of hyperbolic polar body (\ref{hyperstar}), the second equality from the definition of hyperbolic projection body (\ref{DPihK}), the third from (\ref{dmuny}) and the last equality from (\ref{mundrdu}) and (\ref{hkrhokstar}).
Let $F_1(t):=\frac{1}{t},\;\;\;t>0$, then $F_1$ is a decreasing convex function.
Let $$F_2(s):=\int_{0}^{s}\frac{r^{n-1}}{\sqrt{1+r^2}}dr,\;\;\;s>0,$$
then $F_2$ is a increasing convex function. Thus, the composite function $F:=F_2\circ F_1$ is a decreasing convex function. Then its inverse function $F^{-1}$ is also decreasing convex function. Therefore, by (\ref{muleft}), Jensen's inequality and Fibini's theorem,
\begin{eqnarray}\label{muright}
F^{-1}\left(\frac{1}{n\omega_n}\mu_n\left({\rm \Pi}_{\mathbb{H}}^{\circ}(K)\right)\right)
&=&F^{-1}\left(\frac{1}{n\omega_n}\int_{\mathbb{S}^{n-1}}\int_{0}^{h^{-1}({\rm \Pi}\left(\Phi_p(K)\right),u)}\frac{r^{n-1}}{\sqrt{1+r^2}}drdu\right)\\
&=&F^{-1}\left(\frac{1}{n\omega_n}\int_{\mathbb{S}^{n-1}}F\left(h\left({\rm \Pi}\left(\Phi_p(K)\right),u\right)\right)du\right)\nonumber\\
&\leq&\frac{1}{n\omega_n}\int_{\mathbb{S}^{n-1}}h\left({\rm \Pi}\left(\Phi_p(K)\right),u\right)du\nonumber\\
&=&\frac{1}{2n\omega_n}\int_{\mathbb{S}^{n-1}}\left(\int_{\partial\Phi_p(K)}\left|u\cdot\nu^{\Phi_p(K)}(x)\right|d\mathcal{H}^{n-1}(x)\right)du\nonumber\\
&=&\frac{1}{2n\omega_n}\int_{\partial\Phi_p(K)}\left(\int_{\mathbb{S}^{n-1}}\left|u\cdot\nu^{\Phi_p(K)}(x)\right|du\right)d\mathcal{H}^{n-1}(x)\nonumber\\
&=&\frac{\omega_{n-1}}{n\omega_n}\mathcal{H}^{n-1}\left(\partial\Phi_p(K)\right).\nonumber
\end{eqnarray}

On the other hand, the same reasoning applies to $K^\star$, the only difference being the equality in Jensen's inequality, we have
\begin{eqnarray}\label{F-1equal}
F^{-1}\left(\frac{1}{n\omega_n}\mu_n\left({\rm \Pi}_{\mathbb{H}}^{\circ}(K^\star)\right)\right)=\frac{\omega_{n-1}}{n\omega_n}\mathcal{H}^{n-1}\left(\partial\Phi_p(K^\star\right).
\end{eqnarray}
Let $c_o:=\omega_{n-1}/(n\omega_n)$. By (\ref{IHPI}), (\ref{muright}), (\ref{F-1equal}) and the monotonicity of $F^{-1}$,
\begin{eqnarray*}
c_o\mathcal{H}^{n-1}\left(\partial\Phi_p(K^\star\right)=F^{-1}\left(\frac{1}{n\omega_n}\mu_n\left({\rm \Pi}_{\mathbb{H}}^{\circ}(K^\star)\right)\right)\leq F^{-1}\left(\frac{1}{n\omega_n}\mu_n\left({\rm \Pi}_{\mathbb{H}}^{\circ}(K)\right)\right)\leq c_o\mathcal{H}^{n-1}\left(\partial\Phi_p(K)\right).
\end{eqnarray*}
Therefore, hyperbolic projection inequality is stronger than the hyperbolic isoperimetric inequality on $\Phi_h$ transformation.
%


\bibliographystyle{plain}

\end{document}